\documentclass[a4paper,10pt]{article}

\usepackage[utf8]{inputenc}
\usepackage{amsmath}
\usepackage{amssymb}
\usepackage{MnSymbol}
\usepackage{graphicx}
\usepackage{subcaption}
\usepackage{fancyhdr}
\usepackage[shortlabels]{enumitem}
\usepackage[nottoc,numbib]{tocbibind}
\usepackage{xcolor}
\usepackage{aliascnt}
\usepackage{hyperref}
\usepackage{cleveref}
\usepackage{mathtools}
\usepackage{amsfonts}
\usepackage{etoolbox}
\usepackage{amsthm}
\usepackage{pifont}
\usepackage{nicefrac}

\newcommand{\leb}[1]{l^{#1}}
\newcommand{\converges}[2][\infty]{\underset{{#2}\rightarrow{#1}}{\longrightarrow}}
\newcommand{\banachspace}{\mathcal{B}}
\newcommand{\error}{\mathcal{E}}
\DeclarePairedDelimiter\modulus{\lvert}{\rvert}
\newcommand{\Leb}[1]{L^{#1}}
\DeclareMathOperator{\vecspan}{span}
\newcommand{\R}{\mathbb{R}}

\DeclareMathOperator{\dist}{dist}
\newcommand{\N}{\mathbb{N}}
\DeclarePairedDelimiter\norm{\lVert}{\rVert}

\newcommand{\hilbertspace}{\mathcal{H}}
\newcommand{\differential}[1]{\,\mathrm{d}{#1}}
\DeclareMathOperator{\dom}{dom}
\newcommand{\setseparator}{\,:\,}
\DeclareMathOperator{\codim}{codim}

\newcommand{\thetitle}{Approximate Representer Theorems in Non-reflexive Banach Spaces}
\newcommand{\theauthor}{Kevin Schlegel}
\newcommand{\theaddress}{Mathematical Institute\\
University of Oxford\\
Andrew Wiles Building, Radcliffe Observatory Quarter\\
Woodstock Road, Oxford, OX2 6GG, UK}
\newcommand{\theemail}{schlegel@maths.ox.ac.uk}

\newcommand{\citet}[2][]{\cite{#2}{#1}}
\newcommand{\citep}[2][]{\cite{#2}{#1}}

\makeatletter
\newtheoremstyle{theorem}
  {5mm}
  {7mm}
  {\addtolength{\@totalleftmargin}{7mm}
   \addtolength{\linewidth}{-7mm}
   \parshape 1 7mm \linewidth}
  {-7mm}
  {\bfseries}
  {}
  {\newline}
  {\normalfont\textbf{\thmname{#1}\thmnumber{ #2}}\textit{\thmnote{ (#3)}}}

\makeatother
\theoremstyle{theorem}

\newtheorem{theorem}{Theorem}[section]
\AfterEndEnvironment{theorem}{\noindent\ignorespaces}
\crefname{theorem}{theorem}{theorems}
\newaliascnt{proposition}{theorem}
\newtheorem{proposition}[proposition]{Proposition}
\aliascntresetthe{proposition}
\AfterEndEnvironment{proposition}{\noindent\ignorespaces}
\crefname{proposition}{proposition}{propositionss}
\newaliascnt{lemma}{theorem}
\newtheorem{lemma}[lemma]{Lemma}
\aliascntresetthe{lemma}
\AfterEndEnvironment{lemma}{\noindent\ignorespaces}
\crefname{lemma}{lemma}{lemmas}
\newaliascnt{remark}{theorem}

\aliascntresetthe{remark}
\AfterEndEnvironment{remark}{\noindent\ignorespaces}
\crefname{remark}{remark}{remarks}
\newaliascnt{corollary}{theorem}
\newtheorem{corollary}[corollary]{Corollary}
\aliascntresetthe{corollary}
\AfterEndEnvironment{corollary}{\noindent\ignorespaces}
\crefname{corollary}{corollary}{corollaries}
\newaliascnt{definition}{theorem}
\newtheorem{definition}[definition]{Definition}
\aliascntresetthe{definition}
\AfterEndEnvironment{definition}{\noindent\ignorespaces}
\crefname{definition}{definition}{definitions}
\newaliascnt{example}{theorem}

\aliascntresetthe{example}
\AfterEndEnvironment{example}{\noindent\ignorespaces}
\crefname{example}{example}{examples}

\newcommand{\proofend}{\hspace*{\fill}\ding{113}\\}

\let\oldendproof\endproof
\renewcommand{\endproof}{\proofend\oldendproof}

\makeatletter
\newtheoremstyle{notation}
  {5mm}
  {5mm}
  {\addtolength{\@totalleftmargin}{7mm}
   \addtolength{\linewidth}{-7mm}
   \parshape 1 7mm \linewidth}
  {-7mm}
  {\bfseries}
  {}
  {\newline}
  {\normalfont\textbf{\thmname{#1}\thmnumber{ #2}:}\thmnote{ (#3)}}

  \makeatother
\theoremstyle{notation}

\crefname{notation}{notation}{notations}

\newtheoremstyle{subproof}
  {0mm}
  {5mm}
  {}
  {}
  {\bfseries}
  {}
  {\newline}
  {\normalfont\textit{\textbf{\thmname{#1}\thmnumber{ #2}:}\thmnote{ (#3)}}}

\theoremstyle{subproof}

\AfterEndEnvironment{subproof}{\noindent\ignorespaces}
\crefname{subproof}{part}{parts}

\hypersetup{
  colorlinks=false,
  linkbordercolor=red,
  pdfborderstyle={/S/U/W .5}
}

\pagestyle{fancy}

\setlength{\parindent}{0cm}
\setlength{\headheight}{18pt}
\setlength{\headsep}{25pt}

\fancypagestyle{paper}{
  \fancyhead{}
  \fancyhead[L]{\footnotesize\nouppercase{\leftmark}}
  \fancyhead[R]{\footnotesize\theauthor}
  \fancyfoot{}
  \fancyfoot[C]{\thepage}
}

\renewcommand{\maketitle}{
  \thispagestyle{empty}
  \begin{center}
    {\Large \thetitle \par}
    \vspace{5mm}
    {\large \theauthor \par}
    \vspace{2mm}
    {\small \theaddress \par}
    {\small Email: \textit{\theemail} \par}
    \vspace{3mm}
    \today
    \vspace{8mm}
  \end{center}
}
\begin{document}

\maketitle

\pagestyle{paper}
{\Large\bf Abstract}\\
The representer theorem is one of the most important mathematical foundations for regularised learning and kernel methods. Classical formulations of the theorem state sufficient conditions under which a regularisation problem on a Hilbert space admits a solution in the subspace spanned by the representers of the data points. This turns the problem into an equivalent optimisation problem in a finite dimensional space, making it computationally tractable. Moreover, Banach space methods for learning have been receiving more and more attention. Considering the representer theorem in Banach spaces is hence of increasing importance. Recently the question of the necessary condition for a representer theorem to hold in Hilbert spaces and certain Banach spaces has been considered. It has been shown that a classical representer theorem cannot exist in general in non-reflexive Banach spaces. In this paper we propose a notion of approximate solutions and approximate representer theorem to overcome this problem. We show that for these notions we can indeed extend the previous results to obtain a unified theory for the existence of representer theorems in any general Banach spaces, in particular including $\leb{1}$-type spaces. We give a precise characterisation when a regulariser admits a classical representer theorem and when only an approximate representer theorem is possible.

{\bf Keywords:}
representer theorem, approximate representer theorem, regularised interpolation, regularisation

\section{Introduction}
It is a common approach in learning theory to formulate a problem of estimating functions from input and output data as an optimisation problem. Most commonly used is regularisation, in particular \textit{Tikhonov regularisation} where we consider an optimisation problem of the form
\[\min\left\{\mathcal{E}({(\langle f,x_i\rangle,y_i)}^m_{i=1}) + \lambda\Omega(f)\setseparator f\in\hilbertspace\right\}\]
where $\hilbertspace$ is a Hilbert space $\langle\cdot,\cdot\rangle$, $\left\{(x_i,y_i) \setseparator i=1,\ldots,m\right\}\subset\hilbertspace\times Y$ is a set of given input/output data with $Y\subseteq\R$, $\mathcal{E}\colon\R^m\times Y^m\rightarrow\R$ is an \textit{error function}, $\Omega\,\colon\hilbertspace\rightarrow\R$ a \textit{regulariser} and $\lambda>0$ is a \textit{regularisation parameter}. The representer theorem is one of the most important mathematical foundations for such regularised learning problems. It states that under certain conditions on the regulariser the optimisation problem has a solution in the finite dimensional subspace spanned by the data points $x_i\in\hilbertspace$, making it computationally tractable.\\
While these problems are well understood in Hilbert spaces, Banach space methods have been receiving more and more attention in machine learning for various reasons, such as e.g.\ the richer geometric variety in comparison to Hilbert spaces, and certain desirable properties of Banach space norms such as the $\leb{1}$ norm inducing sparsity of the solution vector. We are thus going to consider the more general regularisation problem
\begin{equation}
  \label{eq:regularisation_problem}
  \inf\left\{\error({(L_i(f),y_i)}_{i=1}^m) + \lambda\Omega(f)\setseparator f\in\banachspace\right\}
\end{equation}
where $\banachspace$ is a Banach space and the $L_i$ are continuous linear functionals on $\banachspace$. This framework is general enough to include all classical Hilbert space techniques such as least squares, SVMs and Kernel PCA but also their counterparts in reproducing Kernel Banach spaces introduced by Zhang, Xu and Zhang~\citet{zhang:09,zhang:12}. Furthermore it includes popular regularisation frameworks such as lasso \citep{tibshirani:96} and its variants, e.g.\ square-root lasso \citep{belloni:11}.\\
Moreover, while the $L_i$ could be simple point evaluations $L_i(f) = f(x_i)$, phrasing the problem using general linear functionals has the advantage of including other interesting cases such as local averages of the form $L(f) = \int_{\banachspace}f(x)\differential{P(x)}$ where $P$ is a probability measure on $\banachspace$.\\
With the data given as functionals in the dual space $\banachspace^\ast$ it is clear that the representer theorem in Banach spaces in fact has to be rooted in the dual space rather than the space itself, as can also be seen in the work by Micchelli and Pontil and Zhang, Xu and Zhang~\citet{miccelli:04,zhang:09,zhang:12} and our earlier work~\citep{schlegel:19,schlegel:19.2}. It turns out that the representer theorem is closely related to the properties of the duality mapping
\begin{equation}\label{eq:duality_mapping}
  J:\banachspace \rightarrow 2^{\banachspace^\ast} \qquad J(f) = \left\{L\in \banachspace^\ast \setseparator L(f) = \norm{L}\cdot\norm{f}, \norm{L}=\norm{f} \right\}
\end{equation}
This does not become apparent in Hilbert spaces as the duality mapping is the identity. Before we discuss this in more detail we introduce another common assumption to simplify the problem. While in applications we are often interested in regularisation problems of the form~(\ref{eq:regularisation_problem}), Argyriou, Micchelli and Pontil~\citet{argyriou:09} and our earlier work \citep{schlegel:19,schlegel:19.2} show that in Hilbert spaces and reflexive Banach spaces under very mild conditions~(\ref{eq:regularisation_problem}) admits a representer theorem if and only if the regularised interpolation problem
\begin{equation}
\label{eq:interpolation_problem}
  \inf\left\{\Omega(f)\setseparator f\in\banachspace, L_i(f)=y_i\,\forall i=1,\ldots,m\right\}
\end{equation}
admits a representer theorem. Here by admitting a representer theorem we mean that a solution determined by a linear combination of the data always exists whenever the constraints can be satisfied. In this case we will call $\Omega$ admissible. The connection between regularisation and regularised interpolation is not surprising as the regularisation problem is more general and one obtains a regularised interpolation problem in the limit as the regularisation parameter goes to zero. Thus we can, and will, focus our attention on the regularised interpolation problem which is more convenient to study. The precise statement of this fact with the required conditions and its proof for general Banach spaces are presented in \cref{sec:regularisation-interpolation}, as the proof only requires a few technical modifications from the one presented in our previous work~\citep{schlegel:19.2}. Note that in fact any representer theorem for regularised interpolation holds for any regularisation problem with the same regulariser without any further assumptions. Thus any representer theorem for regularised interpolation proved below is immediately valid for regularisation problems of the form~(\ref{eq:regularisation_problem}).\\
\ \\
It is well known that a regulariser is admissible if it is a nondecreasing function of the Hilbert space norm. By a Hahn-Banach argument as e.g.\ by Zhang and Zhang~\citet{zhang:12} the same is true for reflexive Banach spaces. Argyriou, Micchelli and Pontil~\citet{argyriou:09} showed that this condition is also necessary for differentiable regularisers on Hilbert spaces. Dinuzzo and Sch\"{o}lkopf~\citet{dinuzzo:12} extend this result to lower semicontinuous regularisers on Hilbert spaces. Recently we removed the regularity assumptions on the regulariser \citep{schlegel:19}, proving that an admissible regulariser cannot be very far from being a nondecreasing function of the norm, in a sense made precise in the paper. Moreover the results apply to uniformly convex, uniformly smooth Banach spaces, extending the theory to a wide range of Banach spaces. More recently we further showed that in fact the same necessary and sufficient condition holds for reflexive Banach spaces \citep{schlegel:19.2}. It is interesting, and instructive for this work, to note that our previous work clearly highlights the relationship between the properties of the duality mapping~(\ref{eq:duality_mapping}) and the formulation of the representer theorem. To account for the nonlinearity of the duality mapping in uniform Banach spaces \citep{schlegel:19} we defined a regulariser to be admissible if there exists a solution $f_0$ to~(\ref{eq:interpolation_problem}) with dual element in the linear span of the linear functionals defining the interpolation problem, i.e. $\sum c_i L_i = J(f_0)$. To account for the duality mapping not being univocal in Banach spaces which are not smooth \citep{schlegel:19.2} this equality turns into an inclusion, i.e. $\sum c_i L_i \in J(f_0)$.\\
Moreover, by giving a counterexample \citep{schlegel:19.2} we showed that it is not possible in general to obtain a representer theorem in this sense if the space is not reflexive. This is unfortunate since $\leb{1}$, which is frequently used in applications, is not reflexive. Only the finite dimensional $\leb{1}_n$ is reflexive.\\
\ \\
To overcome this issue we propose to follow the approach of reflecting the properties of the duality mapping in the formulation of the representer theorem. The reason why a representer theorem in the above sense cannot exist in a non-reflexive Banach space is that the duality mapping is not surjective. This means that we cannot expect to find a solution with dual element in the linear span of the linear functionals defining the optimisation problem as described above. But Bishop and Phelps~\citet{bishop:61} prove that every Banach space is subreflexive, i.e.\ the image of the duality mapping $J$ is norm-dense in $\banachspace^\ast$. Thus we can hope to be able to get arbitrarily close to $\vecspan\{L_i\}$, i.e.\ $\dist(J(f_0),\vecspan\{L_i\})<\varepsilon$.\ This leads to a notion of \textit{approximate solution} and \textit{approximate representer theorem} which we are going to introduce in this paper. We are going to show that for this weaker concept of solutions we can indeed obtain the immediate generalisations of the results of Argyriou, Micchelli and Pontil~\citet{argyriou:09} and our earlier work \citep{schlegel:19,schlegel:19.2}. This provides a unified theory for the existence of representer theorems in arbitrary Banach spaces, in particular including $\leb{1}$-type spaces which are very frequently used in applications.

\section{Approximate representer theorems}\label{sec:representer_theorems}
We let $\banachspace$ be an arbitrary Banach space with duality mapping~(\ref{eq:duality_mapping}) and consider the regularised interpolation problem~(\ref{eq:interpolation_problem}). There are two main differences to the setting of reflexive Banach spaces that need to be overcome.\\
Firstly, Argyriou, Micchelli and Pontil~\citet{argyriou:09} and our earlier work \citep{schlegel:19,schlegel:19.2} assume that a minimiser of~(\ref{eq:interpolation_problem}) always exists, whenever the constraints can be satisfied. But in a non-reflexive Banach space we cannot expect the minimum of~(\ref{eq:interpolation_problem}) to always be attained. More precisely, if we denote by $Z$ the subspace
  \[Z = \bigcap\,_{i=1}^m\ker(L_i)\]
  it is easy to see that solving the minimal norm interpolation problem, i.e.\ the case
  $\Omega(f)=\norm{f}_\banachspace$ in~(\ref{eq:interpolation_problem}), is equivalent to minimising $\inf\{\norm{\overline{f}+f_T}_\banachspace \setseparator f_T\in Z\}$ where $\overline{f}\in\banachspace$ is any function satisfying the interpolation constraints. In other words the infimum of the minimal norm interpolation is attained at $f_0$ if and only if the distance of $0$ to the affine space $\overline{f}+Z$ is attained at $f_0\in\overline{f}+Z$. It is well known that such $f_0$ does not always exist if $\banachspace$ is not reflexive. Now different values of the $y_i$ correspond to different shifts $\overline{f}$ of $Z$ so that if the distance is attained, it happens at different points. Thus a solution to the minimal norm interpolation always exist for any given data exactly when $Z$ is proximinal.
\begin{definition}[Proximinal Subspace]
  Let $V$ be a real normed vector space and $W\subset V$ a closed subspace of $V$. We say $W$ is proximinal if the distance from any point in $V$ to $W$ is attained, i.e.\ for every $x\in V$ there is a $y\in W$ such that $\norm{x-y}_V = \dist(x,W)$.
\end{definition}
Following this intuition, instead of assuming a solution to the regularised interpolation always exists when the constraints can be satisfied, we will assume that a solution to \cref{eq:interpolation_problem} always exists if $Z$ is proximinal. While in a reflexive space every closed linear subspace is proximinal the question becomes a lot more delicate in non-reflexive spaces and there are spaces which contain in a sense very few proximinal subspaces, e.g.\ no proximinal subspace of finite codimension greater than one~\citep{read:18,kadets:18}.
Conditions for when a subspace is proximinal are still an active area of research. Some good references for what is known include the books by Singer, Holmes and Conway~\citet{singer:70,holmes:75,conway:94}. We state two results which are of particular relevance to our work in \cref{sec:proximinal-subspaces}.\\
\ \\
Secondly the duality mapping $J$ is surjective if and only if the space is reflexive. Thus $\vecspan{L_i}$ may not be entirely contained in the image of $J$, or as we illustrate in our earlier work \citep{schlegel:19.2}, possibly even $J(\banachspace) \cap \vecspan\{L_i\} = \emptyset$. We thus cannot hope for a solution with a dual element in the linear span of the functionals, i.e. $J(f_0)\cap\vecspan\{L_i\}\neq\emptyset$. But since every Banach space is subreflexive~\citep{bishop:61}, which means the image of the duality mapping is norm dense in the dual space, we might expect to be able to get arbitrarily close to the linear span, i.e. $\dist(J(f_0),\vecspan\{L_i\}) < \varepsilon$.\\
Combining both, approximation of the infimum in~(\ref{eq:interpolation_problem}) and norm-closeness to the span of the $L_i$ leads to the afore mentioned notion of \textit{approximate solution} and \textit{approximate representer theorem} and hence a new definition of admissibility of regularisers.
\begin{definition}[Admissible Regularizer]\label{def:admissibility}
We say a function $\Omega\,\colon \banachspace\rightarrow\R$ is admissible if for any $m\in\N$ and any given data $\{L_1,\ldots,L_m\}\subset\banachspace^\ast$ and $\{y_1,\ldots,y_m\}\subset Y$ such that the interpolation constraints can be satisfied the regularised interpolation problem \cref{eq:interpolation_problem} either
\begin{enumerate}[(i)]
\item\label{item:admissible_i} Admits a solution $f_0$ such that there exist coefficients  $\{c_1,\ldots,c_m\}\subset\R$ such that
  \[\hat{L}=\sum\limits_{i=1}^m c_i L_i \in J(f_0) \qquad \mbox{if }Z = \bigcap_{i\in\N_m}\ker(L_i) \mbox{ is proximinal}\]

\item\label{item:admissible_ii} Or otherwise admits for every $\varepsilon>0$ an approximate solution $f_0^\varepsilon$ such that
  \[\Omega(f_0^\varepsilon) \leq \inf\left\{\Omega(f) \setseparator f\in\banachspace,\, L_i(f)=y_i\,\forall i=1,\ldots,m\right\} + \varepsilon\]
  and there exist $\hat{L}\in J(f_0^\varepsilon)$ and coefficients $\{c_1,\ldots,c_m\}\subset\R$ such that
  \[\norm{\hat{L} - \sum\limits_{i=1}^m c_i L_i}_{\banachspace^\ast} < \varepsilon\]
\end{enumerate}
\end{definition}

\subsection{Existence of approximate representer theorems}
We now show that with this notion of admissibility we can indeed obtain the analogue of the results of Argyriou, Micchelli and Pontil~\citet{argyriou:09} and our previous work \citep{schlegel:19,schlegel:19.2} that being in a sense nondecreasing along tangents is a necessary and sufficient condition for admissibility. As became apparent in the case of reflexive Banach spaces \citep{schlegel:19.2}, when the space is not strictly convex we can only hope to characterise the regulariser as a function of the faces of the norm ball. Recall that an exposed face $F$ of the norm ball $B_r\subset\banachspace$ is a non-empty subset of $B_r$ such that $F = \left\{x\in B_r \setseparator L(x) = \sup_{y\in B_r} L(y)\right\}$ for some $L\in\banachspace^\ast$ (for more details see e.g. \citet{hiriart:04,aizpuru:05}).
\begin{lemma}\label{lma:tangential_bound}
  A function $\Omega\,\colon\banachspace\rightarrow\R$ is admissible if and only if for every exposed face of the norm ball, $\Omega$ attains its minimum in at least one point and for every $f$ in the face where the minimum is attained and every $L\in J(f)$ exposing the face and every $f_T\in\ker(L)$ we have
  \[\Omega(f+f_T) \geq \Omega(f)\]
\end{lemma}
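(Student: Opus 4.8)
The plan is to prove both implications through a single bridging fact: for a feasible point $f_0$ (i.e.\ $L_i(f_0)=y_i$ for all $i$) one has $J(f_0)\cap\vecspan\{L_i\}\neq\emptyset$ if and only if $f_0$ has minimal norm among all feasible points. Indeed, writing $Z=\bigcap_i\ker(L_i)$, the annihilator $Z^\perp$ is exactly $\vecspan\{L_i\}$ (a standard consequence of $Z$ having finite codimension), so an element $\hat L=\sum_i c_iL_i\in J(f_0)$ is precisely a norming functional for $f_0$ that is constant on the affine feasible set $f_0+Z$; the inequality $\norm{\hat L}\,\norm{f_0}=\hat L(f_0)=\hat L(g)\le\norm{\hat L}\,\norm{g}$ forces $\norm{f_0}\le\norm{g}$ for every feasible $g$, and conversely a minimal-norm $f_0$ admits such a supporting functional in $Z^\perp$ by a Hahn--Banach / normal-cone optimality argument. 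Under this identification the duality-mapping clause of the admissibility definition becomes a statement about minimal-norm interpolants, and the exposed face of $B_{\norm{f_0}}$ exposed by $\hat L$ is exactly the set of all minimal-norm feasible points.

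For necessity I would fix an exposed face $F$ of a ball $B_r$, exposed by some $L\in\banachspace^\ast$. Every $f\in F$ satisfies $\norm{f}=r$ and $L(f)=r\norm{L}$, so after normalising $\norm{L}=r$ we have $L\in J(f)$ simultaneously for all $f\in F$; in particular $L$ attains its norm, whence the hyperplane $\ker(L)$ is proximinal. I then apply admissibility to the one-functional interpolation problem with data $L$ and value $y=L(f)$: here $Z=\ker(L)$ is proximinal, so case (i) supplies a minimiser of $\Omega$ over the supporting hyperplane $\{g:L(g)=r\norm{L}\}$ whose dual element lies in $\vecspan\{L\}$, hence, by the bridge, a minimal-norm minimiser lying in $F$. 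This simultaneously shows that $\Omega$ attains its minimum on $F$ and that this minimum equals the minimum over the whole hyperplane; since for any face-minimiser $f$ and any $f_T\in\ker(L)$ the point $f+f_T$ lies in that hyperplane, we get $\Omega(f+f_T)\ge\Omega(f)$. Running this for each exposing functional $L\in J(f)$ yields the full tangential condition.

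For sufficiency I would split on proximinality of $Z$. When $Z$ is proximinal, a minimal-norm feasible point $f_0$ exists, the bridge provides $\hat L=\sum_ic_iL_i\in J(f_0)$, and $\hat L$ exposes a face $F\ni f_0$ of $B_{\norm{f_0}}$ containing every minimal-norm feasible point. The face hypothesis gives a point of $F$ at which $\Omega$ attains its minimum over $F$; the task is to produce such a minimiser that is also feasible, after which tangential monotonicity along $\ker(\hat L)\supseteq Z$ upgrades it to a minimiser of $\Omega$ over the whole feasible set, establishing case (i). When $Z$ is not proximinal the infimum $\dist(0,\,f_0+Z)$ is not attained, and here I would invoke the Bishop--Phelps theorem: since norm-attaining functionals are dense in $\banachspace^\ast$, I can approximate a supporting functional of the affine set (which lies in $Z^\perp=\vecspan\{L_i\}$) by some $\hat L\in J(f_0^\varepsilon)$ with $\norm{\hat L-\sum_ic_iL_i}<\varepsilon$, choosing $f_0^\varepsilon$ in the corresponding face so that the face hypothesis keeps $\Omega(f_0^\varepsilon)$ within $\varepsilon$ of the constrained infimum; this yields case (ii).

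The main obstacle is the reconciliation step in the proximinal part of sufficiency: the face hypothesis only guarantees that $\Omega$ is minimised somewhere on $F$ and is nondecreasing along $\ker(\hat L)$ from such a point, whereas admissibility demands a minimiser that is simultaneously feasible and of minimal norm, so that its duality set meets $\vecspan\{L_i\}$. Because the feasible minimal-norm set $F\cap(f_0+Z)$ may be a proper subset of $F$, one must argue that the minimum of $\Omega$ over $F$ is already attained on this subset rather than at an infeasible face point; controlling this, together with the simultaneous quantitative approximation of both the functional (via Bishop--Phelps) and the value of $\Omega$ in the non-proximinal case, is where the real work lies.
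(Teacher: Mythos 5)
Your bridging fact is correct (it is essentially the Micchelli--Pontil characterisation of minimal-norm interpolants: $\hat{L}=\sum_i c_iL_i\in J(f_0)$ for a feasible $f_0$ exactly when $f_0$ attains $\dist(0,f_0+Z)$, the converse direction following from weak-$*$ compactness of the unit ball of the finite-dimensional $Z^\perp$). Your necessity argument then coincides with the paper's: both reduce to the single-constraint problem for an exposing functional $L$, use that $\ker(L)$ is proximinal precisely because $L$ attains its norm, and read the tangential bound off case~(i) of \cref{def:admissibility}. The proximinal half of sufficiency is also structurally the paper's argument --- the paper reaches the same minimal-norm feasible point by metrically projecting the $\Omega$-minimiser $f_0$ onto $Z$ --- and the reconciliation issue you flag is real: the $\Omega$-minimum of the face exposed by $\hat{L}$ is only known to differ from a feasible point by an element of $\ker(\hat{L})\supsetneq Z$, so it need not itself be feasible. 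The paper's own treatment of this point is terse (it asserts feasibility of $f_0+f_T+\overline{f_T}$ from $\overline{f_T}\in\ker(L)$ alone, which for $m>1$ needs more justification), so your instinct that this is where work remains is sound rather than a defect relative to the paper.

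The genuine gap is in your non-proximinal case. The unquantified Bishop--Phelps theorem gives you a norm-attaining $\hat{L}$ with $\norm{\hat{L}-\sum_i c_iL_i}<\varepsilon$, but with no control over \emph{where} it attains its norm: the face it exposes may be disjoint from, and far from, the feasible set, and then the face hypothesis only hands you a minimiser $g$ of $\Omega$ on that face with no way to compare $\Omega(g)$ to $\inf\{\Omega(f) \setseparator L_i(f)=y_i\}$. To make that comparison you need a feasible point in the hyperplane $\{h \setseparator \hat{L}(h)=\norm{\hat{L}}\,\norm{g}\}$ whose $\Omega$-value is close to the constrained infimum, and an arbitrary Bishop--Phelps perturbation does not supply one: the discrepancy $(\hat{L}-\sum_i c_iL_i)(h)$ is only $O(\varepsilon\norm{h})$ and feasible near-minimisers of $\Omega$ may have large norm. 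What is required is a \emph{localised} statement producing the functional and its norming point together, with the norming point feasible and an $\varepsilon$-minimiser of $\Omega$. That is exactly what the paper's machinery manufactures: Ekeland's variational principle applied to $F(f)=\tfrac12\norm{f-f_0^\varepsilon}^2$ on $Z$ yields $f_T^\varepsilon\in Z$ with a lower bound on directional derivatives, the sandwich-theorem corollary (\cref{clry:sandwich_thm_subdif}) extracts a subgradient of norm $<\varepsilon$ on $Z$, and the explicit Hahn--Banach construction of \cref{sec:extend_functional} extends it to an element of $J(f_0^\varepsilon+f_T^\varepsilon)$ within $\varepsilon$ of $\vecspan\{L_i\}$ --- crucially with $f_T^\varepsilon\in Z$, so the norming point is feasible and the tangential bound controls $\Omega$ there. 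You could substitute the quantitative Bishop--Phelps--Bollob\'as theorem applied at an almost-norming feasible point, but the density statement you invoke is not sufficient as it stands.
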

\begin{definition}
  We are going to refer to the points \cref{lma:tangential_bound} applies to as \textit{admissible points}.
\end{definition}
\newpage
\begin{proof}\\
  {\bf Part 1: }\textit{$\Omega$ admissible $\Rightarrow$ nondecreasing along tangential directions}\\
  Fix any $f\in\banachspace$ and consider, for $L\in J(f)$ arbitrary but fixed, the regularised interpolation problem
    \[\min\left\{\Omega(g) \setseparator g\in\banachspace, L(g)=L(f)=\norm{f}^2\right\}\]
    Conway~(\cite{conway:94}~Prop.~4.7) proves that $\ker(L)$ is proximinal if and only if $L$ is in the image of the duality mapping. As $\Omega$ is assumed to be admissible we thus are in the case~\ref{item:admissible_i} of \cref{def:admissibility} and there exists a solution $f_0$ such that $c\cdot L\in J(f_0)$. We can thus argue exactly as in the case of a reflexive space, we include the short proof for completeness.\\
    If there does not exist $g\in\banachspace$ such that $g\neq f$ and $L\in J(g)$ then the solution can only be $f$ itself. Then for any $f_T\in\ker(L)$ also $L(f+f_T)=L(f)=\norm{f}^2$ and $f+f_T$ also satisfies the constraints and hence necessarily $\Omega(f+f_T) \geq \Omega(f)$.\\
    But if there exists $f\neq g\in\banachspace$ such that $L\in J(g)$ we have no way of making a statement about how $\Omega(f)$ and $\Omega(g)$ compare. All we can say is that in this face there is at least one point where the minimum of $\Omega$ is attained. It is clear that for any of those minimal points the above discussion is true for $L$ exposing the face so that we obtain the tangential bound.\\
    \ \\
    {\bf Part 2: }\textit{Nondecreasing along tangential directions $\Rightarrow$ $\Omega$ admissible}\\
  Fix any data $(L_i,y_i)\in\banachspace^\ast\times Y$ for $i=1,\ldots,m$ such that the constraints can be satisfied. We now have the two cases of \cref{def:admissibility} to consider.\\
  \textbf{Case 1:} If $Z$ is proximinal then by assumption there exists a solution $f_0$ of the regularised interpolation problem and we are looking for a solution in the sense of \cref{def:admissibility}~\ref{item:admissible_i}. We need to show that if $f_0$ is not a solution in this sense then there exists $f_T\in Z$ such that $\vecspan\{L_i\}\cap J(f_0+f_T) \neq \emptyset$. It turns out that the proof for reflexive Banach spaces~\citep{schlegel:19.2} remains valid, and understanding its main ideas is instructive for dealing with the second case. The proof is based on minimising the functional
  \begin{equation}\label{eq:norm_duality_functional}
    F_{f_0}\colon\banachspace\rightarrow\R, \quad F_{f_0}(f) = \int\limits_{0}^{\norm{f-f_0}} t\differential{t} = \frac{\norm{f-f_0}^2}{2}
  \end{equation}
  over the subspace $Z$. Reflexivity of $\banachspace$ is only used to ensure reflexivity of  $Z$ and thus the existence of a minimiser on $Z$ of the continuous, convex and coercive functional $F_{f_0}$. But this minimiser clearly exists exactly when the metric projection of $f_0$ onto $Z$ exists, thus by definition when $Z$ is proximinal. One can check that with the existence of a minimiser of $F_{f_0}$ on $Z$ the rest of the proof for reflexive spaces remains valid. Again we include the remaining short argument for completeness.\\
  For the minimiser $f_T\in Z$ of $F_{f_0}$ we have that there exists $L\in J(f_0+f_T)$ such that $L\big|_Z\equiv0$. Since $\vecspan\{L_i\}=Z^\perp$ this in turn means that $L\in\vecspan\{L_i\}$. It remains to show that $\hat{f}$ indeed minimises $\Omega$. But for $L\in J(f_0+f_T)\cap Z^\perp$ we have $-f_T\in\ker(L)$. If $f_0+f_T$ is exposed by $L$ then the tangential bound applies and
  \[\Omega(f_0+f_T) \leq \Omega((f_0 + f_T)  + (-f_T)) = \Omega(f_0)\]
  so $f_0+f_T$ is a solution of the regularised interpolation problem.\\
  If on the other hand $f_0+f_T$ is not exposed by $L$, then it is contained in a face exposed by $L$. But then for any $\overline{f_T}\in\banachspace$ such that $(f_0+f_T)+\overline{f_T}$ is still contained in this face we have that $L\in J(f_0+f_T+\overline{f_T})$ and $\overline{f_T}\in\ker(L)$ so that $f_0+f_T+\overline{f_T}$ satisfies the interpolation constraints. We can thus choose $\overline{f_T}$ such that $f_0+f_T+\overline{f_T}$ is a minimum of $\Omega$ in the face and the tangential bound applies to it. Thus similarly to before
  \[\Omega(f_0+f_T+\overline{f_T}) \leq \Omega((f_0 + f_T+\overline{f_T})  + (-f_T-\overline{f_T})) = \Omega(f_0)\]
  and $f_0+f_T+\overline{f_T}$ is a solution of the regularised interpolation problem of the desired form.\\
  \ \\
  \textbf{Case 2:} If $Z$ is not proximinal the existence of a minimiser of~(\ref{eq:interpolation_problem}) is not guaranteed. But for every $\varepsilon>0$ there exists $f_0^\varepsilon$ which $\varepsilon$-almost attains the infimum. We need to show that if any such $f_0^\varepsilon$ is not a solution in the sense of \cref{def:admissibility}~\ref{item:admissible_ii} then there exists $f_T^\varepsilon\in Z$ such that $f_0^\varepsilon+f_T^\varepsilon$ is, i.e.\ $\dist(J(f_0^\varepsilon+f_T^\varepsilon),\vecspan\{L_i\}) < \varepsilon$.\\
  Following the approach from case 1 this means we are looking for $f_T^\varepsilon$ with $L\in J(f_0^\varepsilon+f_T^\varepsilon)$ such that $\norm{L\big|_Z}<\varepsilon$. We are again going to consider the functional $F_{f_0^\varepsilon}$ as defined in~(\ref{eq:norm_duality_functional}), for simplicity denoted by $F$ below. With $Z$ not proximinal we do not get a minimiser of $F\big|_Z$ anymore. But by Ekelands variational principle~\citep{ekeland:74} for every $\varepsilon>0$ there exists an approximate minimiser $f_T^\varepsilon \in Z$ such that
    \begin{equation}
      \label{eq:ekeland}
      F(f_T^\varepsilon) \leq \inf_{f\in Z}F(f) + \varepsilon
      \quad\mbox{and}\quad
      F(f_T^\varepsilon) - F(g) < \varepsilon\cdot\norm{f_T^\varepsilon-g} \quad \forall f_T^\varepsilon\neq g\in Z
    \end{equation}
    Choosing $g=f_T^\varepsilon+th$ for $h\in Z$ in \cref{eq:ekeland} we obtain a bound on the directional derivative of $F$

    \begin{equation}
      \label{eq:directional_derivative_bound}
      F'(f_T^\varepsilon,h) = \lim\limits_{t\searrow 0}\frac{F(f_T^\varepsilon+th) - F(f_T^\varepsilon)}{t} > -\varepsilon\cdot\norm{h}
    \end{equation}
    By a corollary of the Sandwich theorem by Simons (\Cref{sec:sandwich-theorem} \cref{clry:sandwich_thm_subdif}) there exists $L\in Z^\ast$ such that $L\in\partial F\big|_Z(f_T^\varepsilon)$ which is necessary to extend it to $L\in J(f_0^\varepsilon+f_T^\varepsilon)$. Moreover
    \[\inf\limits_{h\in B}L(h) = \inf\limits_{h\in B}F'(f_T^\varepsilon,h) \overset{\ref{eq:directional_derivative_bound}}{>} -\varepsilon\cdot\norm{h}\]
    which implies that $\norm{L}_{Z^\ast}< \varepsilon$. By a Hahn-Banach argument this functional can be extended to an $L\in J(f_0^\varepsilon+f_T^\varepsilon)$ such that $\dist(L,\vecspan\{L_i\}) < \varepsilon$. The construction is not difficult but technical and given in \cref{sec:extend_functional}. Thus $f_0^\varepsilon+f_T^\varepsilon$ satisfies the assumptions of \cref{def:admissibility}~\ref{item:admissible_ii}\hspace*{0.1mm}.\\
    The fact that $f_0^\varepsilon+f_T^\varepsilon$ indeed minimises $\Omega$ follows in the same way as in case 1. If $f_0^\varepsilon+f_T^\varepsilon$ is an exposed point it satisfies the tangential bound and thus
$$\Omega(f_0^\varepsilon+f_T^\varepsilon) \leq \Omega((f_0^\varepsilon + f_T^\varepsilon) + (-f_T^\varepsilon)) = \Omega(f_0^\varepsilon)$$
If $f_0^\varepsilon+f_T^\varepsilon$ is not exposed it is contained in a face and just as before we can add another $\overline{f_T}\in Z$ so that the sum is within the face and
$$\Omega(f_0^\varepsilon+f_T^\varepsilon+\overline{f_T}) \leq \Omega((f_0^\varepsilon + f_T^\varepsilon + \overline{f_T}) + (-f_T^\varepsilon - \overline{f_T})) = \Omega(f_0^\varepsilon)$$
Since this new point is in the same face it has the same $L$ as a dual element and is thus an admissible solution.
\end{proof}

\subsection{Uniformly non-rotund spaces}\label{sec:uniformly-non-rotund}
Argyriou, Micchelli and Pontil~\citet{argyriou:09} and our earlier work~\citep{schlegel:19,schlegel:19.2} prove that being tangentially nondecreasing is equivalent to being (almost) radially symmetric. We now want to prove the corresponding geometric interpretation of \cref{lma:tangential_bound}. As we argued in the case of reflexive Banach spaces~\citep{schlegel:19.2}, the geometric variety of arbitrary Banach spaces does not allow for a general, closed form result of this kind. It is clear that our arguments for strictly convex spaces remain true even without reflexivity, but with the most important examples of non-reflexive spaces being $\leb{1}$ and $\Leb{1}$ we are going to introduce and consider a class of function spaces which in particular contains those spaces. The results we obtain are closely related to the ones for $\leb{1}_n$ in~\citep{schlegel:19.2}.\\
Recall that a point $x\in\banachspace$ is rotund if for any $y\in\banachspace$ such that $\norm{y}=\norm{x}$ we have $\norm{y} = \norm{\frac{x+y}{2}}$ implies $x=y$.
\begin{definition}[Uniformly non-rotundness]
  We say a point $0\neq f\in\banachspace$ is uniformly non-rotund if it is not rotund for any two dimensional subspace of $\banachspace$ containing it. In other words, $f$ is not rotund in any direction. We say the space $\banachspace$ is uniformly non-rotund if every $0\neq f\in \banachspace$ is uniformly non-rotund.
\end{definition}
The main reason for uniform non-rotundness to be useful is because it means that there cannot exist faces with a smooth boundary. If any part of the boundary of a face was smooth one would be able to find a two dimensional subspace containing the smooth boundary point and a rotund point in its neighbourhood. If no point in the boundary of a face is smooth then the boundary consists of faces of a lower dimension. As the faces are closed convex sets forming the surface of the norm ball this means that the boundary of a face is given by the intersections with its neighbouring faces. These lower dimensional faces are exposed by another functional and contain their own minimum of $\Omega$. This provides us with a way of running a similar argument as in the cases of uniform and reflexive Banach spaces~\citep{schlegel:19,schlegel:19.2}. From any admissible point we can reach a minimum on the boundary of its face and from there either go back for a radial bound or move further around the ball for a circular bound.
\begin{lemma}\label{lma:circular_bound}
  If for every exposed face of the ball $\Omega$ attains its minimum in at least one point, and for every $f$ in the face where the minimum is attained and every $L\in J(f)$ exposing the face and every $f_T\in\ker(L)$ we have $\Omega(f+f_T) \geq \Omega(f)$, then for any fixed admissible $\hat{f}\in\banachspace$ we have that
  \[\Omega(\hat{f})\leq\Omega(f)\]
  for all $f\in\banachspace$ such that $\norm{\hat{f}} < \norm{f}$.
\end{lemma}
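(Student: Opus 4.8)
The plan is to recast the tangential bound as a statement about supporting hyperplanes, and then to propagate the resulting lower bound $\Omega(\cdot)\geq\Omega(\hat{f})$ outward and ``around'' the sphere by chaining through face-minima, using uniform non-rotundness to guarantee that the relevant cones are thick enough to overlap.

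First I would reformulate the hypothesis. Let $\hat{f}$ be admissible, $r=\norm{\hat{f}}$, lying in an exposed face $F$ of the ball $B_r$ on which $\Omega$ attains its minimum at $\hat{f}$, and let $S$ denote the convex set of norm-one functionals $\ell$ exposing $F$. For each such $\ell$ we have $\ell(\hat{f})=r$ and $\ker\ell=\ker L$ for the corresponding $L\in J(\hat{f})$, so the tangential bound $\Omega(\hat{f}+f_T)\geq\Omega(\hat{f})$ for $f_T\in\ker L$ says exactly that $\Omega\geq\Omega(\hat{f})$ on the whole supporting hyperplane $\{x:\ell(x)=r\}=\hat{f}+\ker\ell$. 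Writing $K_{\hat{f}}=\bigcup_{\ell\in S}\{x:\ell(x)=r\}$, I obtain $\Omega\geq\Omega(\hat{f})$ on $K_{\hat{f}}$, and since $\ell(x)=r$ with $\norm{\ell}=1$ forces $\norm{x}\geq r$, the entire cone $K_{\hat{f}}$ lies in $\{\norm{x}\geq r\}$. The decisive gain from uniform non-rotundness is that low-dimensional faces are exposed by large sets $S$, so the cones $K_g$ attached to ``small'' admissible points are genuinely thick (with nonempty interior in the relevant slice) rather than single hyperplanes.

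Next I would set up the chaining. Call an admissible point $g$ \emph{good} if $\Omega(g)\geq\Omega(\hat{f})$. Two mechanisms produce new good points. \emph{Descent:} if $b$ lies in the boundary of the face of a good point $g$, then $b\in F_g$ gives $\Omega(b)\geq\Omega(g)\geq\Omega(\hat{f})$; by the discussion preceding the lemma, uniform non-rotundness makes this boundary a union of lower-dimensional exposed faces, each attaining its minimum, so I may take $b$ to be such a minimum, and its cone $K_b$ is strictly thicker than $K_g$. \emph{Cone hops:} if $g$ is good and $h$ is any admissible point with $h\in K_g$, then $\Omega(h)\geq\Omega(g)\geq\Omega(\hat{f})$, so $h$ is good. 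Combining these is precisely the paper's picture: descending to a boundary minimum $b$ and then using its thick cone either captures the outward radial points $\lambda\hat{f}$ with $\lambda>1$, which one checks lie in $K_b$ (the \emph{radial} bound), or reaches a neighbouring face-minimum from which one repeats (the \emph{circular} bound). Either way the lower bound $\Omega(\cdot)\geq\Omega(\hat{f})$ is transported to ever larger norms and around the sphere. Finally I would prove coverage: every $f$ with $\norm{f}>r$ lies in $K_g$ for some good $g$, whence $\Omega(f)\geq\Omega(g)\geq\Omega(\hat{f})$. For this I would pass to the two-dimensional slice $P=\vecspan\{\hat{f},f\}$, where uniform non-rotundness forces the unit ball of $P$ to be a polygon, so that walking from $\hat{f}$ toward the direction of $f$ crosses only finitely many faces and yields a finite chain of good points whose thick cones overlap and finally contain $f$.

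The main obstacle is exactly this last coverage/finiteness step in a general, possibly infinite-dimensional, uniformly non-rotund space. I need that the boundary subfaces are genuinely exposed and attain their minima (supplied by the hypothesis together with the non-rotundness discussion), that adjacent face-minima have overlapping cones with no ``rotund gap'' through which a target point could escape, and that the walk around the two-dimensional slice terminates even when the slice-polygon has infinitely many edges (requiring a compactness or limiting argument, and possibly an Ekeland-type $\varepsilon$-relaxation as in Case 2 of the previous lemma should a boundary minimum fail to be attained). Showing that the thick cones of the reachable good points cover all of $\{\norm{x}>r\}$, rather than leaving a lower-dimensional exceptional set, is the crux on which the lemma turns.
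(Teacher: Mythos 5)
Your proposal follows essentially the same route as the paper: your cones $K_g$ are just a repackaging of the paper's one-parameter families $L_t=t\hat{L}+(1-t)L$ exposing the intersection faces $\hat{F}\cap F$, your descent-to-boundary-minima plus cone-hop chaining matches the paper's Part 1 (radial/half-space bound via the minimum $\overline{g}$ of $F_t$) and Part 2 (walking around the two-dimensional slice through the face minima $\overline{g_i}$), and the coverage/termination step you flag as the crux is exactly the point the paper itself handles only informally. No substantive divergence or error relative to the paper's own argument.
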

\begin{proof}
  Once again we follow the proof ideas as for reflexive Banach spaces~\citep{schlegel:19.2}. In particular the proof for $\leb{1}_n$ is instructive. More precisely, the tangential bound from \cref{lma:tangential_bound} can be extended to a radial bound by moving ``out and back'' along tangents. But since the minimum can occur anywhere within the face we actually view $\Omega$ as a function $\overline{\Omega}$ of the faces $F$ of the norm ball in $\banachspace$
  \[\overline{\Omega}(F) = \min\limits_{f\in F}\Omega(f)\]
  We are going to prove that $\overline{\Omega}$ is monotone along the ray $\lambda F$, $\lambda>1$, i.e.\ the minimum of $\Omega$ within a face is nondecreasing as a function of the norm. Since each minimum satisfies the tangential bound this gives the half space bound for all half spaces defined by a tangent plane through the minimum $\hat{f}$, given by some $\hat{L}\in J(\hat{f})$, as illustrated in \cref{fig:radially_increasing_non_rotund_minimum,fig:radially_increasing_non_rotund_exposed}\hspace*{0.2mm}. Moreover by repeatedly moving along tangents we can extend the tangential bound all the way around the circle as can be seen in \cref{fig:radiating_bound_non_rotund}.\\
  But since a general Banach space may not contain any exposed points we need to be more careful than in the cases of strictly convex Banach spaces and $\leb{1}_n$. The difficulties lie in the fact that we need to prove for both arguments that we can always find \textit{admissible points} at which to consider the tangents.\\
  \begin{figure}[h]
      \begin{subfigure}[t]{.32\textwidth}
        \centering
        \includegraphics[width=.95\linewidth]{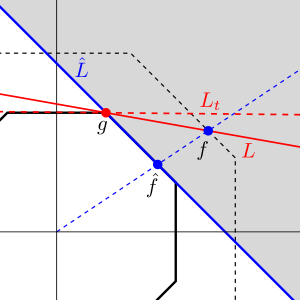}
        \caption{\footnotesize If $\hat{f}$ was the minimum in the face $\hat{F}$, then it has the tangential bound from $\hat{L}$ to reach $\overline{g}$. From $\overline{g}$ we have the tangential bound from $L_t$ to reach any point within $\lambda\hat{F}$ for $1<\lambda<1+\varepsilon$, in particular the minimum within the face.}\label{fig:radially_increasing_non_rotund_minimum}
      \end{subfigure}
      \begin{subfigure}[t]{.32\textwidth}
        \centering
        \includegraphics[width=.95\linewidth]{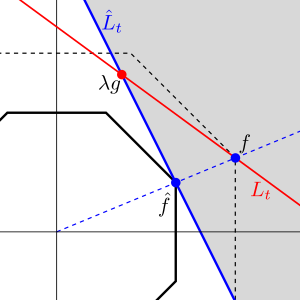}
        \caption{\footnotesize If $\hat{f}$ was an exposed point, then we can construct a set of functionals $\hat{L}_t$ which expose $\hat{f}$ and hit $\lambda\overline{g}$, the minimum in the face $\lambda F_t$. For $\lambda\overline{g}$ we then get a tangential bound back to the face $\mu\hat{F}$.}\label{fig:radially_increasing_non_rotund_exposed}
      \end{subfigure}
      \begin{subfigure}[t]{.32\textwidth}
        \centering
        \includegraphics[width=.95\linewidth]{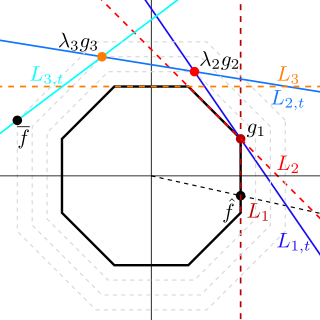}
        \caption{\footnotesize We can move around the circle along points which are exposed in the two dimensional subspace, while staying arbitrarily close to the circle.}\label{fig:radiating_bound_non_rotund}
      \end{subfigure}
      \caption{The tangential bound can be extended radially and around the ball.}\label{fig:radially_increasing_non_rotund}
    \end{figure}

    \noindent
    \textbf{Part 1: }\textit{Bound $\Omega$ on the half spaces given by the tangent planes through $\hat{f}$}\\
    We start by proving that $\overline{\Omega}$ is radially nondecreasing. Note that we don't need to show monotonicity for the entire ray $\lambda F$ for $1<\lambda$. It is sufficient to consider $1<\lambda<1+\varepsilon$ as long as the $\varepsilon$ is at least nondecreasing as a function of the norm along the ray.\\
    Fix an admissible $\hat{f}\in\banachspace$ and let $X$ be any 2-dimensional subspace containing $\hat{f}$. As $\banachspace$ is uniformly non-rotund no point in $X$ is rotund so its unit ball consists of straight line sections and corners as shown in \cref{fig:radially_increasing_non_rotund}. In particular there exists $g\neq\hat{f}$ in the same straight section as $\hat{f}$ and exposed in $X$. It is also clear that there are linear functionals $\hat{L},L\in X^\ast$, where $\hat{L}$ exposes the straight segment containing $\hat{f}$ and $g$, and $L$ exposes only the point $g$. By Hahn-Banach there are extensions of these functionals to $\banachspace$, also denoted by $\hat{L}$ and $L$, exposing faces $\hat{F}$ and $F$ respectively.\\
    We now let $L_t = t\hat{L} + (1-t)L$, $t\in(0,1)$ so that $L_t$ exposes the face $F_t = \hat{F}\bigcap F$ which is strictly smaller than $\hat{F}$. Thus $\Omega$ has a minimum in $F_t$, $\overline{g}$ say. Since $\overline{g}\in F_t \subset \hat{F}$ it is clear that $\hat{L}$ attains its norm at $\overline{g}$ which means that there is a tangent from $\hat{f}$ to $\overline{g}$. Being the minimum in $F_t$ we have that $\overline{g}$ has the tangential bound for all $L_t$.\\
    Putting those observations together we obtain the claimed bound. If $\hat{f}$ was the minimum in the face $\hat{F}$, then it has the tangential bound from $\hat{L}$ to reach $\overline{g}$. From $\overline{g}$ we have the tangential bound from $L_t$ to reach any point within $\lambda\hat{F}$ for $1<\lambda<1+\varepsilon$, in particular the minimum within the face. This is illustrated in \cref{fig:radially_increasing_non_rotund_minimum}.\\
    If on the other hand $\hat{f}$ was an exposed point, then it is clear that using an argument similar to the one above we can construct a set of functionals $\hat{L}_t$ which expose $\hat{f}$ and hit $\lambda\overline{g}$, the minimum in the face $\lambda F_t$. For $\lambda\overline{g}$ we then get a tangential bound back to the face containing $\mu\hat{f}$ in the same way as above. This is illustrated in \cref{fig:radially_increasing_non_rotund_exposed}.\\
    This shows that the minimum of $\Omega$ for any fixed face $F$ is indeed monotone, which in turn means that any admissible point bounds every point in the open half spaces spanned by a tangent plane at the point.\\
    \ \\
    \textbf{Part 2: }\textit{Extend the bound around the circle}\\
    Next we show that from any fixed admissible point $\hat{f}$ we can reach every other admissible point of norm strictly bigger than $\norm{\hat{f}}$. This combined with the half space bound gives the claimed bound for all points outside the circle.\\
    Fix an admissible point $\hat{f}\in\banachspace$ and the admissible point $\overline{f}\neq\hat{f}$ with $\norm{\overline{f}}>\norm{\hat{f}}$ to be reached. Then $\hat{f}$ and $\overline{f}$ span a two dimensional subspace $X$. As before $X$ only consists of straight line sections and corners. Clearly we can construct a sequence of points $g_i$ and linear functionals $L_i$ exposing the straight line section from $g_{i-1}$ to $g_i$ as illustrated in \cref{fig:radiating_bound_non_rotund}. As in part 1 by Hahn-Banach we can extend the $L_i$ to $\banachspace$, exposing faces $F_i$. Moreover by a similar construction as in part 1 we obtain functionals $L_{i,t} = L_{i} + (1-t)L_{i+1}$, $t\in(0,1)$ exposing the face $F_{i,t} = F_{i} \bigcap F_{i+1}$ which in particular contains $g_i$ and has a minimum $\overline{g_i}$. This provides us with a tangent from either $g_i$ or $\overline{g_i}$ to $g_{i+1}$ or if necessary $\overline{g_{i+1}}$ so that we can indeed get from $\hat{f}$ to $\overline{f}$ along tangents to points which are minima of a face and hence admissible. Each step includes a step away from the circle but it is clear that it can always be made arbitrarily small by varying $t$.\\
    With this process we can reach any admissible $\overline{f}$ with $\norm{\overline{f}}>\norm{\hat{f}}$, which combined with the half space bound from part 1 proves the claim.
  \end{proof}
  The proof makes clear that, just as for $\leb{1}_n$, we are only able to make statements about the minima of faces but not about their location within a face or the remaining points within the face. We thus can only obtain a result about radial symmetry in the spirit of Argyriou, Micchelli and Pontil~\citet{argyriou:09} and our previous work~\citep{schlegel:19,schlegel:19.2} by viewing $\Omega$ as a function of the faces of the norm ball as in the proof of~\cref{lma:circular_bound}. In other words we are thinking of the faces as being collapsed to one point where $\Omega$ is minimised. If we think of $\Omega$ in this way then the same intuition of almost radial symmetry as in the afore mentioned papers applies.
  \newpage
\begin{theorem}\label{thm:almost_radially_symmetric}
  A function $\Omega\,\colon\banachspace\rightarrow\R$ is admissible if and only if viewed as a function $\overline{\Omega}$ of the faces $F$ of the norm ball in $\banachspace$, $\overline{\Omega}(F) = \min_{f\in F}\Omega(f)$ it is of the form
  \[\overline{\Omega}(F) = h(\norm{f}_\banachspace \setseparator f\in F)\]
  for some nondecreasing $h\,\colon \mathopen[0,\infty\mathclose)\rightarrow\R$ whenever $\norm{f}_\banachspace\neq r$ for $r\in\mathcal{R}$. Here $\mathcal{R}$ is an at most countable set of radii where $h$ has a jump discontinuity. For any $f$ with $\norm{f}_\banachspace=r\in\mathcal{R}$ the value $\overline{\Omega}(F)$ is only constrained by the monotonicity property., i.e.\ it has to lie in between $\lim_{t\nearrow r}h(t)$ and $\lim_{t\searrow r}h(t)$.\\
  \ \\
  Moreover if a face $F$ contains an exposed point then in points of continuity of $h$ the function $\Omega$ attains its minimum in every exposed point in $F$.
\end{theorem}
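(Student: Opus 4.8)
The plan is to derive everything from the two preceding lemmas: \cref{lma:tangential_bound} reduces admissibility to the tangential bound, and \cref{lma:circular_bound} supplies the radial monotonicity. First I would record the elementary but essential observation that every point of an exposed face shares a common norm: if $F=\{x\in B_r\setseparator L(x)=\sup_{y\in B_r}L(y)\}$ with $L\neq 0$, then $L(x)=r\norm{L}$ together with $L(x)\leq\norm{L}\norm{x}\leq r\norm{L}$ forces $\norm{x}=r$. This is what makes the parametrisation $\overline{\Omega}(F)=h(\norm{f}_\banachspace\setseparator f\in F)$ meaningful, since the argument of $h$ is the well-defined common radius of the face.

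For the forward direction, assume $\Omega$ is admissible. By \cref{lma:tangential_bound} the minimum in every exposed face is attained, so $\overline{\Omega}$ is a genuine minimum, and the tangential bound holds, which is exactly the hypothesis of \cref{lma:circular_bound}. Given any face $G$ at radius $s$ and any face $F$ at radius $r$ with $s<r$, pick a minimiser $\hat{f}$ of $\Omega$ in $G$; it is an admissible point, so \cref{lma:circular_bound} gives $\Omega(\hat{f})\leq\Omega(f)$ for every $f\in F$, and minimising over $f\in F$ yields $\overline{\Omega}(G)\leq\overline{\Omega}(F)$. Writing $\underline{h}(r)$ and $\overline{h}(r)$ for the infimum and supremum of $\overline{\Omega}$ over all faces at radius $r$, this shows $\overline{h}(s)\leq\underline{h}(r)$ whenever $s<r$. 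Hence $h:=\underline{h}$ is nondecreasing and so has at most countably many jump discontinuities; I take $\mathcal{R}$ to be this set. At a continuity point $r\notin\mathcal{R}$ the sandwich $\underline{h}(r)\leq\overline{h}(r)\leq\inf_{t>r}\underline{h}(t)=h(r^+)=h(r)=\underline{h}(r)$ forces $\underline{h}(r)=\overline{h}(r)$, so every face at radius $r$ takes the common value $h(r)$. At a jump point $r\in\mathcal{R}$ the same inequalities give $[\underline{h}(r),\overline{h}(r)]\subseteq[h(r^-),h(r^+)]$, which is precisely the asserted constraint that the value lies between $\lim_{t\nearrow r}h(t)$ and $\lim_{t\searrow r}h(t)$.

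For the converse, suppose $\overline{\Omega}$ has the stated radial form; by \cref{lma:tangential_bound} it suffices to verify the tangential bound. Let $f$ be a minimiser of $\Omega$ in an exposed face $F$ at radius $r=\norm{f}$, let $L\in J(f)$ expose $F$, and let $f_T\in\ker(L)$. Since $L(f+f_T)=L(f)=\norm{f}^2$ and $\norm{L}=\norm{f}$, we get $\norm{f+f_T}\geq L(f+f_T)/\norm{L}=\norm{f}$. If $\norm{f+f_T}=\norm{f}$ then $L(f+f_T)=r\norm{L}=\sup_{B_r}L$, so $f+f_T$ lies in the same exposed face $F$ and $\Omega(f+f_T)\geq\overline{\Omega}(F)=\Omega(f)$; if $\norm{f+f_T}>\norm{f}$ then $f+f_T$ lies in a face $F'$ of strictly larger radius and the monotonicity of $\overline{\Omega}$ established above, valid across jump points as well since $s<r$ implies $h(s^+)\leq h(r^-)$, gives $\Omega(f+f_T)\geq\overline{\Omega}(F')\geq\overline{\Omega}(F)=\Omega(f)$. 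This is the tangential bound, so $\Omega$ is admissible.

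Finally, for the ``moreover'' clause, let $F$ be a face at a continuity radius $r\notin\mathcal{R}$ and let $e\in F$ be an exposed point of the ball. Then $\{e\}$ is itself an exposed face at radius $r$, so by the radial form $\Omega(e)=\overline{\Omega}(\{e\})=h(r)=\overline{\Omega}(F)=\min_{g\in F}\Omega(g)$, i.e.\ $\Omega$ attains its minimum over $F$ at $e$. I expect the main obstacle to be the bookkeeping at the jump radii in the forward direction: \cref{lma:circular_bound} only compares faces of strictly different radii, so faces sharing a common radius are genuinely incomparable, and the crux is to package the one-sided comparisons $\overline{h}(s)\leq\underline{h}(r)$ into the countable exceptional set $\mathcal{R}$ and the trapped values, while verifying that the continuity-point sandwich closes exactly.
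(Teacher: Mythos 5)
Your proof is correct, but it reaches the conclusion by a genuinely different route than the paper in the key forward step. The paper's (sketched) argument imports the machinery of its predecessors: it first shows that radial continuity of $\Omega$ would force $\overline{\Omega}$ to be radially symmetric, then defines a radially mollified regulariser $\widetilde{\Omega}$, checks that mollification preserves the tangential bound and hence admissibility, and finally ``un-mollifies'' to conclude that $\overline{\Omega}$ must have had the claimed form up to a countable set of jump radii. You instead build $h$ directly: from \cref{lma:circular_bound} you extract the strict-radius comparison $\overline{h}(s)\leq\underline{h}(r)$ for $s<r$ between the supremum and infimum of $\overline{\Omega}$ over faces at each radius, set $h=\underline{h}$, invoke the countability of the discontinuities of a monotone function to define $\mathcal{R}$, and close the sandwich at continuity points. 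This is more elementary and self-contained (no mollification, no appeal to the earlier papers), and it isolates exactly where the exceptional set $\mathcal{R}$ comes from; the paper's approach buys a statement that connects more directly to the ``almost radially symmetric'' formulations of \cite{argyriou:09,schlegel:19,schlegel:19.2}, which is why the author phrases it that way. Your treatment of the converse matches the paper's one-line argument (minima exist by definition of $\overline{\Omega}$; the tangential bound follows from the radial monotonicity via the case split $\norm{f+f_T}=\norm{f}$ versus $\norm{f+f_T}>\norm{f}$). For the ``moreover'' clause you also take a shortcut: since an exposed point $e$ is itself a singleton exposed face, the radial form at a continuity radius immediately gives $\Omega(e)=h(r)=\overline{\Omega}(F)$, whereas the paper runs a limiting argument with tangents from $\lambda f$, $\lambda\nearrow 1$, to the minimiser $g$ of the face; the two are equivalent (the paper's limit argument implicitly uses the same identification $\Omega(\lambda f)=h(\lambda\norm{f})$ for exposed points), and yours is cleaner. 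The only cosmetic caveat is that your argument, like the paper's, leaves the behaviour at radius $0$ and the standing assumption of uniform non-rotundness (needed for \cref{lma:circular_bound}) implicit.
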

\begin{proof}\textit{(Sketch)}
  It turns out that the proof of this result for uniform Banach spaces~\citep{schlegel:19} with the small adjustments for $\leb{1}_n$~\citep{schlegel:19.2} is also valid for non-reflexive Banach spaces. We are going to sketch the arguments below for completeness, more detail can be found in the afore mentioned papers.\\
  \ \\
  Firstly it is easy to show that if $\Omega$ is continuous in radial direction then $\overline{\Omega}$ has to be radially symmetric. It is clear that we can only obtain radial symmetry for admissible points but since these bound all other points from below this is sufficient. If $f$ and $g$ are admissible points of the same norm and $\Omega(f)>\Omega(g)$ say, then by \cref{lma:circular_bound} for all $1<\lambda\in\R$ we have $\Omega(\lambda g)\geq\Omega(f)$, which implies that $\modulus{\Omega(\lambda g)-\Omega(g)}\geq\modulus{\Omega(f)-\Omega(g)}>0$ contradicting radial continuity of $\Omega$.\\
  \ \\
  Moreover by the same arguments as for uniform Banach spaces and $\leb{1}_n$ we can define the radially mollified regulariser
  \[\widetilde{\Omega}(f) = \int\limits_{-1}^0 \rho(t)\Omega\left((\norm{f}-t)\frac{f}{\norm{f}}\right)\differential{t}\]
  and check by direct calculations that $\widetilde{\Omega}(f+f_T) \geq \widetilde{\Omega}(f)$ so $\widetilde{\Omega}$ is tangentially nondecreasing and hence admissible if $\Omega$ was admissible. This means that we can mollify in radial direction while preserving admissibility.\\
  \ \\
  Putting these two observations together we obtain the result. We know that $\overline{\Omega}$ is a monotone function of the norm, so a monotone function on the real line and after mollification it is in fact radially symmetric. Thus the same considerations as for uniform and reflexive Banach spaces~\citep{schlegel:19,schlegel:19.2} say that $\overline{\Omega}$ must have been of the claimed form.\\
   The converse is clear, since the value of $\overline{\Omega}$ is defined to be the minimum across each face, so minima exist and clearly satisfy the tangential bound.\\
   \ \\
   For the moreover part assume $f$ is an exposed point in a face $F$ which contains a minimum $g\neq f$ of $\Omega$. Assume further that $h$ is continuous in $\norm{f}$. Then there are tangents from $\lambda f$ to $g$ for $1-\varepsilon < \lambda < 1$. This is essentially the same situation as we saw before in \cref{fig:radially_increasing_non_rotund_minimum}\hspace*{0.2mm}, from the exposed point we can hit a point in the face above. Thus $\Omega(\lambda f) \leq \Omega(g)$. But since $g$ is a minimum for $\Omega$ and is in the same face as $f$
   \[\Omega(\lambda f) \leq \Omega(g) \leq \Omega(f)\]
   By continuity of $h$ in $\norm{f}$ we have $\Omega(\lambda f)\converges[1]{\lambda}\Omega(f)$ and so $\Omega(f) = \Omega(g)$.
\end{proof}
This shows that for any Banach space which is either strictly convex or uniformly non-rotund an admissible regulariser has to be essentially radially symmetric in the appropriate sense. This includes every space we can think of which is commonly used in applications. One should expect that similar arguments are possible for any Banach space once the space has been fixed to remove the issue of geometric variety. More precisely, if a space is relevant for an application it should be an easy check that the same proof strategy of moving between admissible points along tangents can be applied to obtain the analogous result of \cref{lma:circular_bound} and thus also of \cref{thm:almost_radially_symmetric}\hspace*{0.2mm}. This conjecture is reasonable because with $\leb{1},\leb{\infty},c_{00}$ and $\Leb{1}$ we cover some examples of spaces often thought of as ``as bad as it can get''. Many of the spaces one would think of as giving the geometric variety to make a general statement impossible can likely be seen as ``nicer'' than some of the examples covered here. Once one fixes the space it is usually not difficult to find admissible points to prove the required results.

\section{Conclusions}
The above results conclude the work by Argyriou, Micchelli and Pontil and Dinuzzo and Sch\"{o}lkopf~\citet{argyriou:09,dinuzzo:12} and our earlier work \citep{schlegel:19,schlegel:19.2}, providing a unified framework for the existence of representer theorems in general Banach spaces. Most notably this framework now includes non-reflexive Banach spaces, in particular $\leb{1}$ and $\Leb{1}$-type spaces. It thus includes common methods such as lasso~\citep{tibshirani:96} and variations of it such as square-root lasso~\citep{belloni:11}. Moreover it contains other spaces which may be very interesting for applications, but which are currently not used due to a lack in mathematical and computational theory. As an example consider $c_{0}$, the space of sequences converging to zero equipped with the maximum norm. Sequences in this space can for applications be $\varepsilon$-approximated by vectors in $c_{00}$, i.e.\ sequences of finitely many non-zero bounded coefficients. Our framework may provide a basis for the development of a theory for regularised learning in such spaces.
\subsection{Optimality}
It is clear from the proof of \cref{lma:tangential_bound} that proximinality of the subspace $Z$ is by definition the property that determines whether we can have an exact representer theorem for any given data $y_i$. We note further that \cref{def:admissibility}~\ref{item:admissible_ii} is the best we can hope for when $Z$ is not proximinal. Firstly the infimum is not always attained so we can only find a sequence of approximate minimisers. But moreover we also cannot achieve $\dist(J(f_0),\vecspan{L_i}) < \varepsilon$ for all $\varepsilon>0$ with a single $f_0\in\banachspace$.\\
To see this consider the case $\banachspace=\leb{1}$, $\banachspace^\ast=\leb{\infty}$. Let $L = {\left(\nicefrac{n}{n+1}\right)}_{n\in\N} = \left(\nicefrac12,\nicefrac23,\nicefrac34,\ldots\right)$ and consider the regularised interpolation problem
\[\min\{\Omega(f) \setseparator f\in\leb{1}, L(f) = \norm{L}_{\leb{\infty}}^2 = 1\}\]
First of all $\norm{L}_{\leb{\infty}}=1$ and there does not exist $f\in\leb{1}$ such that $\norm{f}_{\leb{1}}=1$ and $L(f) = 1$ so $\vecspan{L}\cap J(\leb{1}) = \{0\}$ and there cannot be a solution in the sense of \cref{def:admissibility}~\ref{item:admissible_i}\hspace*{0.2mm}. Furthermore any solution $f_0$ has to be of norm bigger than 1. This means that also any $\hat{L}\in J(f_0)$ would be of norm bigger than 1, $1+\delta$ for some $\delta>0$ say. But as $\hat{L}\in\leb{\infty}$ is in the image of the duality mapping, there exists an element in the sequence where the norm is attained, $\hat{L}_i = 1+\delta$. But then $\norm{\hat{L} - L} \geq \hat{L}_i - L_i > (1+\delta) - 1 = \delta > 0$ and so $f_0$ could not be a valid solution for any $\varepsilon < \delta$. This shows that the best we could hope for is finding a distinct solution for any $\varepsilon>0$.
\subsection{Future work}
Using the characterisation of admissible regularisers we showed \citep{schlegel:19,schlegel:19.2} that in fact the solution in the sense of the exact representer theorem (\cref{def:admissibility}~\ref{item:admissible_i}) is independent of the regulariser but only depends on the function space the optimisation problem is posed in. This is a very interesting result which highlights the importance of extending common learning frameworks to a variety of Banach spaces. Moreover it means that one is free to choose whichever regulariser $\Omega$ is most suitable for a given application, whether this is numerical computation or mathematical proofs.\\
The proof of this is based on Theorem 1 in Micchelli and Pontil~\citet{miccelli:04} which characterises solutions to the regularised interpolation problem as points where the distance of 0 to the subspace $\overline{f}+Z$ is attained, as discussed at the beginning of \cref{sec:representer_theorems}. It is thus plausible to expect a similar result to hold for the approximate representer theorem (\cref{def:admissibility}~\ref{item:admissible_ii}) by characterising approximate solutions as points where the distance of 0 to $\overline{f}+Z$ is almost attained.\\
\ \\
Furthermore, even when an exact representer theorem exists, in numerical implementations we are often not going to compute the exact solution but only an approximation to a given $\varepsilon$ accuracy. It would be interesting to explore whether the notion of an approximate representer theorem can lead to the design of new algorithms which may improve the computation of approximate solutions even in cases when an exact version of the theorem exists.

\begin{appendix}
\bibliographystyle{acm}
\bibliography{biblio.bib}

\section{The sandwich theorem}\label{sec:sandwich-theorem}
Using the Hahn-Banach-Lagrange theorem, a stronger version of the Hahn-Banach theorem, Simons~\citet{simons:08} proves the following Sandwich theorem.

\begin{theorem}[Sandwich Theorem]\label{thm:sandwich_thm}
  Let $V$ be a nonzero, real vector space and $P\,\colon V\rightarrow\R$ sublinear. Define a vector ordering $\leq_P$ on $V$ by
  \[u \leq_P v \mbox{ if } P(u-v) \leq 0\]
  Further assume $X$ is a nonempty set, $k\,\colon X\rightarrow(-\infty,\infty]$ not identically $\infty$ and $j\,\colon X\rightarrow V$.\\
  Suppose that for all $x_1,x_2\in\dom(k)$ there exists $u\in\dom(k)$ such that
  \[j(u) \leq_P \frac12 j(x_1) + \frac12 j(x_2) \qquad k(u) \leq \frac12 k(x_1) + \frac12 k(x_2)\]
  Then there exists a linear functional $L$ on $V$ such that $L\leq P$ and
  \[\inf\limits_{x\in X} \left[L(j(x)) + k(x)\right] = \inf\limits_{x\in X} \left[P(j(x)) + k(x)\right]\]
\end{theorem}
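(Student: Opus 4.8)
The plan is to derive the theorem from the Hahn--Banach--Lagrange theorem by manufacturing a genuine convex domain out of the abstract set $X$, so that the given midpoint hypothesis becomes the only place where real work is required. First note that one inequality is free: since $L \leq P$ forces $L(j(x)) \leq P(j(x))$ for every $x$, any such $L$ already satisfies $\inf_{x}[L(j(x)) + k(x)] \leq \inf_{x}[P(j(x)) + k(x)]$. Writing $\beta = \inf_{x \in X}[P(j(x)) + k(x)]$, the entire content is to produce a linear $L \leq P$ with $\inf_x[L(j(x)) + k(x)] \geq \beta$. Since $k$ is not identically $\infty$ we have $\beta < \infty$, and the case $\beta = -\infty$ is vacuous because the free inequality then forces both sides to be $-\infty$.

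To build a convex domain I would pass to the free vector space $D = \R^{(\dom(k))}$ of finitely supported real functions on $\dom(k)$ and let $C \subset D$ be the convex hull of the point masses $\{\delta_x : x \in \dom(k)\}$, i.e.\ the formal convex combinations $\sum_i \lambda_i \delta_{x_i}$. Extend $j$ and $k$ affinely to $\tilde{j}\colon C \to V$, $\tilde{j}(\sum_i \lambda_i \delta_{x_i}) = \sum_i \lambda_i j(x_i)$, and $\tilde{k}\colon C \to \R$, $\tilde{k}(\sum_i \lambda_i \delta_{x_i}) = \sum_i \lambda_i k(x_i)$. Because $\tilde{j}$ is affine, $P(\tilde{j}(\lambda c_1 + (1-\lambda)c_2) - \lambda\tilde{j}(c_1) - (1-\lambda)\tilde{j}(c_2)) = P(0) = 0$, so $\tilde{j}$ is $P$-convex in the sense required by the Hahn--Banach--Lagrange theorem, while $\tilde{k}$ is linear hence convex. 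Applying that theorem on the genuinely convex set $C$ yields a linear $L \leq P$ on $V$ with
\[\inf_{c \in C}\left[L(\tilde{j}(c)) + \tilde{k}(c)\right] = \inf_{c \in C}\left[P(\tilde{j}(c)) + \tilde{k}(c)\right].\]

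It then remains to collapse both infima over $C$ back to infima over $X$. The left-hand side is immediate, since $L \circ \tilde{j} + \tilde{k}$ is affine on $C$ and so its infimum over $C$ equals its infimum over the vertices $\delta_x$, namely $\inf_{x}[L(j(x)) + k(x)]$. For the right-hand side the bound $\inf_{C}[P\circ\tilde{j} + \tilde{k}] \leq \beta$ is clear from testing on vertices, and the reverse inequality is exactly where the hypothesis enters. Iterating the midpoint assumption by pairing points produces, for any $2^n$ points of $\dom(k)$ listed with multiplicities, a single $u \in \dom(k)$ with $j(u) \leq_P \sum_i \mu_i j(x_i)$ and $k(u) \leq \sum_i \mu_i k(x_i)$ for every dyadic weight vector $(\mu_i)$; sublinearity then upgrades $j(u) \leq_P \sum_i \mu_i j(x_i)$ to $P(j(u)) \leq P(\sum_i \mu_i j(x_i))$, whence $\beta \leq P(j(u)) + k(u) \leq (P\circ\tilde{j} + \tilde{k})(c)$ at every dyadic $c$. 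Since $P$ restricted to the finite-dimensional span of the finitely many $j(x_i)$ occurring in a fixed $c$ is continuous and $\tilde{k}$ is a finite linear combination, the convex function $P\circ\tilde{j} + \tilde{k}$ is continuous on each finite-dimensional face of $C$, so the bound $\geq \beta$ passes from the dense dyadic points to all of $C$. Chaining the equalities gives $\inf_x[L(j(x)) + k(x)] = \beta$, as required.

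I expect the genuine obstacle to be precisely this last passage: the hypothesis only controls a single surrogate point $u$ for the equal-weight midpoint of two points, whereas matching the convexity of $C$ needs the analogous control for arbitrary convex combinations. The dyadic iteration handles rational weights, but turning this into the clean identity $\inf_{C}[P\circ\tilde{j} + \tilde{k}] = \beta$ requires the finite-dimensional continuity and density step above, and one must be careful that it only ever invokes continuity of $P$ on finite-dimensional subspaces, where it is automatic, rather than on all of $V$, where $P$ need not be continuous. Everything else --- the free-space construction and the vertex arguments --- is routine bookkeeping once Hahn--Banach--Lagrange is taken as given.
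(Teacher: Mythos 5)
The paper does not actually prove this statement: it is quoted verbatim from Simons~\cite{simons:08} with the remark that Simons derives it from his Hahn--Banach--Lagrange theorem, and the appendix then only works out the corollary (\cref{clry:sandwich_thm_subdif}). So there is no in-paper proof to match yours against; what you have written is a genuine reduction of the non-convex-domain version to the convex-domain Hahn--Banach--Lagrange theorem, and as far as I can check it is correct. The free vector space $\R^{(\dom(k))}$ makes $\tilde{j}$ and $\tilde{k}$ well defined (each point of $C$ has a unique representation over the linearly independent $\delta_x$), the affine $\tilde{j}$ is trivially $P$-convex since $P(0)=0$, and the collapse of the left-hand infimum to the vertices is immediate for an affine function on a convex hull. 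The only step carrying real content is, as you say, the inequality $\inf_C\bigl[P\circ\tilde{j}+\tilde{k}\bigr]\geq\beta$, and your dyadic iteration plus finite-dimensional continuity handles it: the iteration needs the (easy but worth writing out) facts that $\leq_P$ is transitive and stable under averaging, both following from subadditivity and positive homogeneity of $P$; the continuity step needs only that a finite-valued convex function on $\R^N$, here $\lambda\mapsto P\bigl(\sum_i\lambda_i j(x_i)\bigr)+\sum_i\lambda_i k(x_i)$, is automatically continuous, so you never invoke any topology on $V$. What your route buys is modularity --- the midpoint hypothesis is exposed as exactly the convexity surrogate needed to convexify the domain --- whereas Simons obtains the statement directly from his more primitive one-step extension lemma; the trade-off is that you must assume the convex-domain Hahn--Banach--Lagrange theorem in a form that allows $C$ to live in an arbitrary (infinite-dimensional) ambient vector space, which Simons' formulation does permit, so the reduction is legitimate.
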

\ \\
Using this theorem we can easily deduce a corollary that allows us to construct a continuous linear functional of small norm which is in the subdifferential of a given convex function. For a real valued, convex function $F\,\colon V\rightarrow\R$ on a Banach space $V$ define the directional derivative of $F$ at $\overline{f}\in V$ in direction $h\in V$ as the limit
\[F'(\overline{f},h) = \lim\limits_{t\searrow 0}\frac{F(\overline{f}+th)-F(\overline{f})}{t}\]
Then $F'$ is everywhere finite and sublinear~\citep{borwein:06}. We choose $P=F'(\overline{f},\cdot)$ for some fixed $\overline{f}$ in the Sandwich theorem. For simplicity we denote the order relation by $\leq_F$. We let $X=B_V$ the unit ball in $V$ and $j(f)=f$ be the canonical embedding of $B_V$ into $V$. Lastly define $k$ to be identically 0.\\
With $j$ being the identity map we get
\[j(h) \leq_F \frac12 j(h_1) + \frac12 j(h_2) \Leftrightarrow F'(\overline{f},h-\frac12 h_1-\frac12 h_2) \leq 0\]
But for any $h_1,h_2\in B_V$ also $\nicefrac12 h_1 + \nicefrac12 h_2\in B_V$ and $F'(\overline{f},0)=0$ trivially. Further the condition on $k$ is trivially satisfied since $k$ is identically 0. Thus we obtain the following corollary of the sandwich theorem which yields a linear map in the subdifferential of $F$ at $\overline{f}$ with some control over its behaviour on the unit ball which will allow us to bound its norm.

\begin{corollary}[Sandwich theorem for subdifferentials]\label{clry:sandwich_thm_subdif}
  Let $V$ be a nonzero, real vector space, $F\,\colon V\rightarrow\R$ a convex, everywhere continuous function and $\overline{f}\in V$. Then there exists a linear functional $L$ on $V$ such that $L(\cdot)\leq F'(\overline{f},\cdot)$, i.e. $L\in\partial F(\overline{f})$, and
  \[\inf\limits_{h\in B_V} L(h) = \inf\limits_{h\in B_V} F'(\overline{f},h)\]
\end{corollary}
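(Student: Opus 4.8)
The final statement is Corollary~\ref{clry:sandwich_thm_subdif}: given a nonzero real vector space $V$, a convex everywhere-continuous $F\colon V\to\R$, and a fixed $\overline{f}\in V$, I must produce a linear functional $L$ on $V$ with $L(\cdot)\leq F'(\overline{f},\cdot)$ (equivalently $L\in\partial F(\overline{f})$) and $\inf_{h\in B_V}L(h)=\inf_{h\in B_V}F'(\overline{f},h)$.

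**Plan.** The whole point is that this is a direct specialisation of the Sandwich Theorem (\cref{thm:sandwich_thm}), and the excerpt has already set up every ingredient, so my job is really to verify the hypotheses and then read off the conclusion. First I would recall the standard fact (cited to Borwein--Vanderwerff) that for a convex, everywhere-continuous $F$ the directional derivative $P(\cdot):=F'(\overline{f},\cdot)$ is everywhere finite and sublinear; this makes $P$ a legitimate choice of sublinear functional in \cref{thm:sandwich_thm}. Then I would instantiate the remaining data exactly as the prose above indicates: take $X=B_V$ the unit ball, $j\colon X\to V$ the inclusion $j(h)=h$, and $k\equiv 0$ (which is not identically $\infty$, as required).

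**Verifying the midpoint hypothesis.** The only hypothesis of \cref{thm:sandwich_thm} that needs checking is the convexity-type condition: for all $x_1,x_2\in\dom(k)=B_V$ there must exist $u\in\dom(k)$ with $j(u)\leq_P \tfrac12 j(x_1)+\tfrac12 j(x_2)$ and $k(u)\leq\tfrac12 k(x_1)+\tfrac12 k(x_2)$. I would simply take $u=\tfrac12 x_1+\tfrac12 x_2$, which lies in $B_V$ by convexity of the ball. Then $j(u)-\bigl(\tfrac12 j(x_1)+\tfrac12 j(x_2)\bigr)=0$, so the ordering condition reads $P(0)=F'(\overline{f},0)\leq 0$, which holds with equality since $F'(\overline{f},0)=0$ trivially. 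The $k$-condition is $0\leq 0$, also immediate. Hence the hypotheses are met.

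**Reading off the conclusion.** Applying \cref{thm:sandwich_thm} yields a linear functional $L$ on $V$ with $L\leq P=F'(\overline{f},\cdot)$ and
\[
  \inf_{h\in B_V}\bigl[L(j(h))+k(h)\bigr]=\inf_{h\in B_V}\bigl[P(j(h))+k(h)\bigr].
\]
Since $j$ is the identity and $k\equiv 0$, this collapses to $\inf_{h\in B_V}L(h)=\inf_{h\in B_V}F'(\overline{f},h)$, which is the displayed infimum identity. Finally, the inequality $L(\cdot)\leq F'(\overline{f},\cdot)$ is exactly the defining condition for membership in the subdifferential $\partial F(\overline{f})$, giving the parenthetical claim. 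I expect no genuine obstacle here: the corollary is engineered to be a one-line invocation of the Sandwich Theorem, and the only thing that even requires a moment's thought is confirming $P=F'(\overline{f},\cdot)$ is sublinear and finite — which is precisely the cited property of directional derivatives of continuous convex functions — and noting the trivial midpoint choice $u=\tfrac12 x_1+\tfrac12 x_2$.
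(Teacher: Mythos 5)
Your proposal is correct and follows exactly the paper's own derivation: the paper likewise specialises the Sandwich Theorem with $P=F'(\overline{f},\cdot)$, $X=B_V$, $j$ the inclusion, $k\equiv 0$, and verifies the midpoint hypothesis via $u=\tfrac12 x_1+\tfrac12 x_2$ together with $F'(\overline{f},0)=0$. Nothing is missing.
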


\section{Extension of the linear functional in the proof of \cref{lma:tangential_bound}}\label{sec:extend_functional}
In the proof of \cref{lma:tangential_bound} we obtain a functional $L\in Z^\ast$ such that $\norm{L}_{Z^\ast}<\varepsilon$. We want to extend this functional to $L\in\banachspace^\ast$ such that $L\in J(f_0^\varepsilon+f_T^\varepsilon)$. We proceed similarly to the proof of the Beurling-Livingston theorem~\citep{blazek:82,schlegel:19.2}. Let $\overline{Z}$ be the vector space generated by $Z$ and $f_0^\varepsilon$ and extend $L$ to $\overline{Z}$ by setting
    \[L(f_0^\varepsilon) = L(f_T^\varepsilon) - \norm{f_T^\varepsilon-f_0^\varepsilon}_\banachspace^2\]
    Then $L(f_T^\varepsilon-f_0^\varepsilon) = \norm{f_T^\varepsilon-f_0^\varepsilon}_\banachspace^2$ so $\norm{L}_{\overline{Z}^\ast}\geq\norm{f_T^\varepsilon-f_0^\varepsilon}$. Since the norm of $L$ on $Z$ is bounded by $\varepsilon$, and we can without loss of generality assume $\varepsilon \leq \norm{f_T^\varepsilon-f_0^\varepsilon}$, we have that the norm of $L$ on $\overline{Z}$ can only be strictly bigger than $\norm{f_T^\varepsilon-f_0^\varepsilon}$ if there is a point $\lambda f_T+\nu f_0^\varepsilon$ for $f_T\in Z$ and $\nu\neq0$ where $L$ has a value strictly bigger than $\norm{f_T^\varepsilon-f_0^\varepsilon}\cdot\norm{\lambda f_T+\nu f_0^\varepsilon}$. Since $\nu$ is nonzero we can divide through by $\nu$ and absorb the constant into the subspace $Z$ to equivalently look at points of the form $f_T+f_0^\varepsilon$. But for those points we find that
\begin{align*}
    L(f_T+f_0^\varepsilon) & = L(f_T+f_T^\varepsilon) - \norm{f_T^\varepsilon-f_0^\varepsilon}^2\\
               & \leq \varepsilon\cdot\norm{f_T+f_T^\varepsilon}-\norm{f_T^\varepsilon-f_0^\varepsilon}^2\\
               & \leq \norm{f_T^\varepsilon-f_0^\varepsilon} \cdot \modulus{\norm{f_T+f_T^\varepsilon} - \norm{f_T^\varepsilon-f_0^\varepsilon}}\\
               & \leq \norm{f_T^\varepsilon-f_0^\varepsilon} \cdot \norm{f_T+f_0^\varepsilon}
\end{align*}
Thus indeed
\[\norm{L} = \norm{f_T^\varepsilon-f_0^\varepsilon}\]
Now extend $L$ by Hahn-Banach to a linear functional on $\banachspace$ of the same norm. Then since $L(f_T^\varepsilon-f_0^\varepsilon) = \norm{f_T^\varepsilon-f_0^\varepsilon}^2$ by construction $L\in J(f_T^\varepsilon-f_0^\varepsilon)$. But then $-L\in J(f_0^\varepsilon + (-f_T^\varepsilon))$. This completes the proof.

\section{Regularisation and interpolation}\label{sec:regularisation-interpolation}
\begin{theorem}\label{thm:problem_equivalence}
  Let $\error$ be a lower semicontinuous error functional which is bounded from below. Assume further that for some $\nu\in\R^m\setminus\{0\}, y\in Y^m$ there exists a unique minimiser $0\neq a_0\in\R$ of $\min\{\error\left({(a\nu_i,y_i)}_{i\in\N_m}\right)\setseparator a\in\R\}$. Assume the regulariser $\Omega$ is  lower semicontinuous and has bounded sublevel sets.\\
  Then $\Omega$ is admissible for the regularised interpolation problem (\ref{eq:interpolation_problem}) if the pair $(\error,\Omega)$ is admissible for the regularisation problem (\ref{eq:regularisation_problem}).
\end{theorem}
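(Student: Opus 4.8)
The plan is to reduce the claim to the geometric characterisation already available on the interpolation side, namely \cref{lma:tangential_bound}, rather than to match prescribed interpolation data directly. By \cref{lma:tangential_bound} it suffices to verify that $\Omega$ satisfies the face condition: for every exposed face of the norm ball $\Omega$ attains its minimum in at least one point, and at every such minimiser $f$, for every $\Lambda\in J(f)$ exposing the face and every $f_T\in\ker(\Lambda)$, one has $\Omega(f+f_T)\geq\Omega(f)$. The engine of the whole argument is the following cancellation principle, which is immediate: if $g^\ast$ minimises a regularisation objective $\error((L_i(g),y_i)_i)+\lambda\Omega(g)$, then for any $g$ with $L_i(g)=L_i(g^\ast)$ for all $i$ the error terms coincide, whence $\Omega(g^\ast)\leq\Omega(g)$; that is, $g^\ast$ minimises $\Omega$ over the entire affine hyperplane determined by the realised values $L_i(g^\ast)$.

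The construction I would use to place such a $g^\ast$ on a prescribed exposed face is to collapse the $m$-dimensional error onto the distinguished ray supplied by the hypothesis. Fix an exposing functional $\Lambda\in\banachspace^\ast$ and set the regularisation functionals to $L_i:=\nu_i\Lambda$ with the targets $y$ from the hypothesis. Then $\error((L_i(g),y_i)_i)=\phi(\Lambda(g))$ where $\phi(a):=\error((a\nu_i,y_i)_i)$, and by assumption $\phi$ has a unique minimiser $a_0\neq0$. Since $\vecspan\{L_i\}=\vecspan\{\Lambda\}$, the representer property produced by admissibility of the pair $(\error,\Omega)$ reads $c\Lambda\in J(g^\ast)$, i.e.\ $\Lambda$ exposes $g^\ast$. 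By the cancellation principle $g^\ast$ minimises $\Omega$ over the hyperplane $\{g:\Lambda(g)=\Lambda(g^\ast)\}$, and since this hyperplane contains the exposed face through $g^\ast$, the bound $\Omega(g^\ast+f_T)\geq\Omega(g^\ast)$ for $f_T\in\ker(\Lambda)$ follows at once from hyperplane minimality, which is exactly the face condition. As the minimum of $\Omega$ over the face equals the minimum over the hyperplane, the same bound holds verbatim at every minimiser of $\Omega$ on the face, as \cref{lma:tangential_bound} demands.

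Two points need care and together form the main obstacle. First, one must guarantee that $\Lambda(g^\ast)\neq0$, so that $g^\ast$ genuinely lies on a nondegenerate face at positive radius; this is precisely what the uniqueness of the \emph{nonzero} ray-minimiser $a_0$ buys, since it prevents $\Omega$ from collapsing the solution onto $g^\ast=0$, and by rescaling $\Lambda\mapsto s\Lambda$ one sweeps the realised value through $a_0/s$ and thereby covers every radius, so that every exposed face in every direction is reached. Second, existence of the regularisation minimiser is no longer automatic in a non-reflexive space; here I would use that an exposing functional $\Lambda$ lies in the image of $J$, so that $\ker(\Lambda)$ is proximinal (Conway, Prop.~4.7, exactly as in the proof of \cref{lma:tangential_bound}), placing us in the exact branch \cref{def:admissibility}~(i) for the regularisation problem and supplying an honest minimiser $g^\ast$. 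Because \cref{lma:tangential_bound} already absorbs the proximinal/non-proximinal dichotomy and the approximate case on the interpolation side, deriving the face condition through these well-behaved single-direction instances is enough to conclude that $\Omega$ is admissible. The hypotheses that $\error$ be lower semicontinuous and bounded below and that $\Omega$ be lower semicontinuous with bounded sublevel sets enter only to make the collapsed objective $\phi(\Lambda(\cdot))+\lambda\Omega(\cdot)$ lower semicontinuous and coercive, which is what the regularisation side needs in order to deliver $g^\ast$.
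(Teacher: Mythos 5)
Your reduction to the face condition of \cref{lma:tangential_bound}, the rank-one choice of data $L_i=\nu_i\Lambda$, the cancellation principle, and the appeal to Conway's proposition to land in the exact branch of \cref{def:admissibility} all match the paper's strategy. The genuine gap is the step where you assert that the realised value $\Lambda(g^\ast)$ equals $a_0/s$ and hence, by rescaling $\Lambda$, sweeps every radius. For a fixed regularisation parameter $\lambda>0$ the minimiser of $\phi(\Lambda(g))+\lambda\Omega(g)$ realises whatever value of $\Lambda(g)$ optimises the trade-off between your collapsed objective $\phi$ and the partial minimum of $\Omega$ over the corresponding hyperplane; this value is not $a_0/s$ in general, can perfectly well be $0$ when $\lambda$ is large and $\Omega$ is smallest at the origin (uniqueness of the nonzero minimiser $a_0$ constrains $\phi$ alone, not the regularised objective), and as $s$ varies there is no reason for the realised radii $\norm{g^\ast_s}$ to exhaust $(0,\infty)$ rather than skip intervals. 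Since \cref{lma:tangential_bound} demands the face condition on every exposed face at every radius, your argument only verifies it on an uncontrolled set of radii and does not suffice.

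The paper closes exactly this gap by fixing the normalised data $L_i=\frac{a_0}{\norm{L}^2}\nu_i L$ and sending $\lambda\to0$: the minimisers $f_\lambda$ stay in a fixed sublevel set of $\Omega$ (compare against any feasible point of the interpolation constraint), hence are bounded, and one extracts a weak* convergent subsequence in the bidual $\banachspace^{\ast\ast}$ --- this is precisely where non-reflexivity bites, since weak convergence in $\banachspace$ itself is unavailable. Lower semicontinuity of $\error$ together with the uniqueness of $a_0$ then forces the limit $\overline{f}^{\ast\ast}$ to satisfy $\overline{f}^{\ast\ast}(L)=\norm{L}^2$ exactly, and a further argument (the lower semicontinuous extension of $\Omega$ to the bidual combined with proximinality of $\ker(L)$) pulls this bidual minimiser back to an actual minimiser in $\banachspace$ lying on the prescribed face. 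This limiting argument is the analytic content of the theorem and is what the hypotheses of lower semicontinuity and bounded sublevel sets are actually for; your proposal assigns them a different role (coercivity of the collapsed objective at fixed $\lambda$) and omits the limit entirely, so the proof does not go through as written.
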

The proof is very similar to the case of reflexive Banach spaces~\citep{schlegel:19.2}, which generalises the proof for Hilbert spaces given by Argyriou, Micchelli and Pontil~\citet{argyriou:09}. We are going to sketch the overall argument, which can be found in detail in the afore mentioned papers, and only go into detail where ever the proof differs for non-reflexive Banach spaces.\\
\newpage
\begin{proof}
  We are going to show that $\Omega$ is tangentially nondecreasing in the sense of \cref{lma:tangential_bound}.\\
  For every $\lambda>0$ consider the regularisation problem
  \[\min\left\{\error\left({\left(\frac{a_0}{\norm{L}^2}L(f)\nu_i,y_i\right)}_{i=1}^m\right) + \lambda\Omega(f) \setseparator f\in\banachspace\right\}\]
  Since $\ker(L)$ is proximinal~\citep[Prop.~4.7]{conway:94} we are in the situation of  \cref{def:admissibility}~\ref{item:admissible_i} and by admissibility of the pair $(\error,\Omega)$ there exist solutions $f_\lambda\in\banachspace$ such that
  \[J(f_\lambda)\cap\vecspan\{L\}\neq\emptyset\]
  Using the boundedness of sublevel sets we obtain a weakly* convergent subsequence $(f_{\lambda_l})_{l\in\N}$ such that $\lambda_l\converges{l}0$ and $f_{\lambda_l}\overset{*}{\rightharpoonup}\overline{f}^{\ast\ast}$ as $l\rightarrow\infty$. Since $\banachspace$ is not reflexive we do not get weak convergence as in the cases of Hilbert spaces and reflexive Banach spaces~\citep{argyriou:09,schlegel:19.2}.\\
  But by lower semicontinuity of $\error$ we still have that
  \[\error\left({\left(\frac{a_0}{\norm{L}^2}\overline{f}^{\ast\ast}(L)\nu_i,y_i\right)}_{i=1}^m\right) \leq \error({(a_0\nu_i,y_i)}_{i=1}^m)\]
  which as before implies that $\overline{f}^{\ast\ast}(L) = \norm{L}^2$.\\
  Just as before we obtain $\norm{\overline{f}^{\ast\ast}} = \norm{L}$ so that $\overline{f}^{\ast\ast}\in J(L)$. This means that $\overline{f}^{\ast\ast}$ and $\hat{f}$, where $\hat{f}(L) = L(f)$, both are in the same face of the norm ball in $\banachspace^{\ast\ast}$.\\
  Considering the lower semicontinuous extension $\overline{\Omega}:\banachspace^{\ast\ast}\rightarrow\R$ of $\Omega$ as before we find that $\overline{f}^{\ast\ast}$ is the minimiser of
  \[\min\{\overline{\Omega}(f^{\ast\ast}) \setseparator f^{\ast\ast}\in\banachspace^{\ast\ast}, f^{\ast\ast}(L)=\norm{L}^2\}\]
  But by Conway (\cite{conway:94}~Prop.~4.7) $\ker(L)$ is proximinal and thus by assumption the interpolation problem
  \[\min\{\Omega(f) \setseparator f\in\banachspace, L(f)=\norm{L}^2\}\]
  has a solution. When the original function attains its minimum then the minimum of the lower semicontinuous extension is not less than the minimum of the original function. Thus $\overline{\Omega}$ attains its minimum on $\hat{\banachspace}$. Thus there exists a $g\in\banachspace$ such that $\hat{g}$ is in the same face as $\overline{f}^{\ast\ast}$ and $\overline{\Omega}(\overline{f}^{\ast\ast}) = \overline{\Omega}(\hat{g})$. By the same arguments as for reflexive Banach spaces~\citep{schlegel:19.2} either $g=f$ or $f$ is an equivalent minimum or $f$ is not admissible.\\
  \ \\
  Finally note that the claim is trivially true for $L=0$ as in that case $\error$ is independent of $f$ and for every $\lambda$ the minimiser $f_\lambda$ has to be zero to satisfy $J(f_\lambda)\cap\{0\}\neq\emptyset$. This means $\Omega$ is minimised at 0.\\
\end{proof}

\begin{theorem}
Let $\error, \Omega$ be an arbitrary error functional and regulariser satisfying the general assumption that minimisers always exist. Then the pair $(\error,\Omega)$ is admissible for the regularisation problem (\ref{eq:regularisation_problem}) if $\Omega$ is admissible for the regularised interpolation problem (\ref{eq:interpolation_problem}).
\end{theorem}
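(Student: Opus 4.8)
The plan is to prove this converse direction to \cref{thm:problem_equivalence} by reducing the regularisation problem to a regularised interpolation problem, freezing the error term at an optimal point so that the representer structure guaranteed by admissibility of $\Omega$ for (\ref{eq:interpolation_problem}) transfers directly. First I would fix arbitrary data $(L_i,y_i)\in\banachspace^\ast\times Y$, $i=1,\ldots,m$. By the standing assumption that minimisers always exist, (\ref{eq:regularisation_problem}) admits a minimiser $f^\ast\in\banachspace$. I would then set $y_i^\ast=L_i(f^\ast)$ and consider the regularised interpolation problem (\ref{eq:interpolation_problem}) for the data $(L_i,y_i^\ast)$, whose constraints are satisfiable by construction, namely by $f^\ast$ itself.

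The key observation is that along the constraint set $\{f\in\banachspace \setseparator L_i(f)=y_i^\ast\ \forall i\}$ the error term is constant, equal to $\error((y_i^\ast,y_i)_{i=1}^m)$, so on this set minimising the regularisation objective coincides with minimising $\Omega$. Since $f^\ast$ is a global minimiser of (\ref{eq:regularisation_problem}) and lies in this set, it must also minimise $\Omega$ over the set, whence $f^\ast$ solves the interpolation problem and $\inf\{\Omega(f)\setseparator L_i(f)=y_i^\ast\}=\Omega(f^\ast)$. Conversely, any solution $f_0$ of this interpolation problem satisfies $L_i(f_0)=y_i^\ast$ and $\Omega(f_0)=\Omega(f^\ast)$, so its regularisation objective value equals $\error((y_i^\ast,y_i)_{i=1}^m)+\lambda\Omega(f^\ast)$, the global optimum; hence every interpolation solution is again a global minimiser of (\ref{eq:regularisation_problem}). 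This is the step that makes the implication hold without any further hypotheses, and it is where I would be most careful to verify that the error value is genuinely unchanged when passing between the two problems.

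With this reduction in place the conclusion follows by invoking admissibility of $\Omega$ for (\ref{eq:interpolation_problem}) on the data $(L_i,y_i^\ast)$. The subspace $Z=\bigcap_{i}\ker(L_i)$ depends only on the functionals $L_i$, so the proximinality dichotomy of \cref{def:admissibility} is identical for both problems. If $Z$ is proximinal, \cref{def:admissibility}~\ref{item:admissible_i} yields an interpolation solution $f_0$ with $\sum_i c_iL_i\in J(f_0)$; by the previous paragraph this $f_0$ is a minimiser of (\ref{eq:regularisation_problem}) carrying exactly the required exact representer structure. If $Z$ is not proximinal, \cref{def:admissibility}~\ref{item:admissible_ii} yields, for every $\varepsilon>0$, an $f_0^\varepsilon$ satisfying the constraints with $\Omega(f_0^\varepsilon)\leq\Omega(f^\ast)+\varepsilon$ together with $\hat L\in J(f_0^\varepsilon)$ and coefficients obeying $\norm{\hat L-\sum_i c_iL_i}_{\banachspace^\ast}<\varepsilon$. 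Since the error term is unchanged on the constraint set, the regularisation objective at $f_0^\varepsilon$ exceeds the global optimum by at most $\lambda\varepsilon$, so $f_0^\varepsilon$ is an approximate minimiser of (\ref{eq:regularisation_problem}) carrying the approximate representer structure, and letting $\varepsilon\to0$ gives the approximate representer theorem for the pair $(\error,\Omega)$. Combined with \cref{thm:problem_equivalence} this establishes the full equivalence. I expect no serious obstacle in this direction, which is the easy one; the only points needing attention are that the approximate solutions of \cref{def:admissibility}~\ref{item:admissible_ii} satisfy the interpolation constraints exactly, so that the error value is preserved, and the elementary bookkeeping of the $\lambda\varepsilon$ slack when the $\Omega$-bound is carried through to the regularisation objective.
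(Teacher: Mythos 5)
Your proposal is correct and follows essentially the same route as the paper: take a minimiser $f^\ast$ of the regularisation problem, pass to the interpolation problem with data $y_i^\ast=L_i(f^\ast)$, note that the error term is constant on the constraint set so the representer (or approximate representer) solution supplied by admissibility of $\Omega$ has objective value no larger than that of $f^\ast$ (up to the $\lambda\varepsilon$ slack in the approximate case), and is therefore itself an (approximate) minimiser of the regularisation problem. The paper's proof is just a terser version of the same argument; your explicit handling of the $\varepsilon$-bookkeeping in case~\ref{item:admissible_ii} is a welcome addition rather than a deviation.
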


\begin{proof}
  Let $f_0$ be a solution of the regularisation problem (\ref{eq:regularisation_problem}). Consider the associated regularised interpolation problem
  \[\min\{\Omega(f) \setseparator f\in\banachspace, L_i(f) = L_i(f_0) \, \forall i\in\N_m\}\]
  Since $\Omega$ is admissible for regularised interpolation, for this interpolation problem there exists a solution $\overline{f_0}$ (or $\overline{f_0^\varepsilon}$) in the sense of \cref{def:admissibility}. But then $\Omega(\overline{f_0}) \leq \Omega(f_0)$ and they have the same error as they agree on the data. Thus $\overline{f_0}$ is a solution of (\ref{eq:regularisation_problem}) in the sense of the representer theorem and the pair $(\error,\Omega)$ is admissible.\\
\end{proof}
\vspace*{-4mm}
In conclusion under the assumptions of \cref{thm:problem_equivalence} we have that the pair $(\error,\Omega)$ is admissible for the regularisation problem (\ref{eq:regularisation_problem}) if and only if $\Omega$ is admissible for the regularised interpolation problem (\ref{eq:interpolation_problem}).
\section{Proximinal subspaces}\label{sec:proximinal-subspaces}
The following corollary of Godini's theorem gives a criterium for a subspace to be proximinal which is of particular relevance to our work. Godini's theorem and the corollary, including their proofs, can be found in \cite{holmes:75}.
\begin{corollary}
  Let $V$ be a real normed vector space with unit ball $B_V$ and $W\subset V$ a closed subspace of $V$.
  \begin{enumerate}[(i)]
  \item\label{item:proximinal_fin_dim} If $W$ is finite dimensional it is proximinal.
  \item\label{item:proximinal_fin_codim} If $\codim(W)=m<\infty$ then for any basis $L_1,\ldots,L_m$ of $W^\perp$ define a map $S$ by
    \vspace*{-4mm}
    \begin{gather*}
      S\,\colon V\rightarrow\R^m\\
      S(x) = (L_1(x),\ldots,L_m(x))
    \end{gather*}
    Then $W$ is proximinal if and only if $S(B_V)$, the image of the unit ball of $V$ under the map $S$, is closed in $\R^m$.
  \end{enumerate}
\end{corollary}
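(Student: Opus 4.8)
The plan is to handle the two parts by different mechanisms: a direct compactness argument for the finite-dimensional case, and an identification of $S(B_V)$ with a quotient-norm ball for the finite-codimensional case.

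For part~\ref{item:proximinal_fin_dim} the argument is routine. Fix $x\in V$ and set $d=\dist(x,W)$. Choose a minimising sequence $(w_n)\subset W$ with $\norm{x-w_n}_V\to d$. Then $\norm{w_n}_V\le\norm{x-w_n}_V+\norm{x}_V$ is bounded, and since $W$ is finite dimensional the Heine--Borel property yields a subsequence converging to some $w\in W$ (using that $W$ is closed). Continuity of the norm gives $\norm{x-w}_V=d$, so the distance is attained.

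For part~\ref{item:proximinal_fin_codim} I would first record the structural facts. Since $L_1,\dots,L_m$ are linearly independent, $S$ is surjective onto $\R^m$, and $\ker S=\bigcap_i\ker L_i=W$ because $W$ is closed and $W^\perp=\vecspan\{L_i\}$. The key observation is that $\dist(\cdot,W)$ factors through $S$: for any $z$ the quantity $\dist(z,W)=\inf\{\norm{v}_V\setseparator S(v)=S(z)\}$ depends only on $S(z)$, so it defines $\phi\colon\R^m\to[0,\infty)$, $\phi(\xi)=\inf\{\norm{v}_V\setseparator S(v)=\xi\}$. Because $W$ is closed, $\phi$ is exactly the quotient norm on $V/W$ transported along the linear isomorphism $\bar S\colon V/W\to\R^m$ induced by $S$; in particular $\phi$ is a genuine norm on $\R^m$, and proximinality of $W$ is by definition the statement that the infimum defining $\phi(\xi)$ is attained for every $\xi$.

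The heart of the argument is to relate $S(B_V)$ to the $\phi$-unit ball via the sandwich $\{\xi\setseparator\phi(\xi)<1\}\subseteq S(B_V)\subseteq\{\xi\setseparator\phi(\xi)\le1\}$: the right inclusion holds since $\phi(S(b))\le\norm{b}_V\le1$ for $b\in B_V$, and the left since $\phi(\xi)<1$ produces $v$ with $S(v)=\xi$ and $\norm{v}_V<1$. As $\R^m$ is finite dimensional, the closed $\phi$-ball is the closure of the open $\phi$-ball, so $S(B_V)$ is closed if and only if $S(B_V)=\{\xi\setseparator\phi(\xi)\le1\}$. I would then check that this equality is equivalent to $\phi$ being attained at every boundary point $\phi(\xi)=1$, hence by positive homogeneity at every $\xi$, which is precisely proximinality. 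Concretely, for proximinal $\Rightarrow$ closed one uses attainment to realise each boundary point as $S(b)$ with $\norm{b}_V=1$; for closed $\Rightarrow$ proximinal one takes $x$ with $r=\dist(x,W)>0$, observes $S(x)/r\in\overline{S(B_V)}=S(B_V)$ by normalising a minimising sequence to the unit sphere, picks $b\in B_V$ with $S(b)=S(x)/r$, and notes $rb\in x+W$ with $\norm{rb}_V\le r$, forcing equality and attainment.

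The main obstacle is the bookkeeping in this final equivalence: one must track that $S(B_V)$ can differ from the closed $\phi$-ball \emph{only} at boundary points where the quotient-norm infimum fails to be attained, and that finite-dimensionality of $\R^m$ is what simultaneously makes the closure of the open ball equal to the closed ball and guarantees surjectivity of $S$. Once the quotient norm $\phi$ and the sandwich inclusions are in place, the chain ``$S(B_V)$ closed $\iff$ every $\phi(\xi)$ attained $\iff$ $W$ proximinal'' follows by scaling.
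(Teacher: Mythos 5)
Your argument is correct. Note, however, that the paper does not prove this corollary at all: it is quoted from Holmes \cite{holmes:75}, where it is derived from Godini's theorem, so there is no in-paper proof to match against. Your route is therefore a genuinely self-contained alternative: part~(i) by the standard bounded-minimising-sequence-plus-Heine--Borel argument, and part~(ii) by identifying $S(B_V)$ (up to its boundary) with the unit ball of the quotient norm $\phi$ on $V/W\cong\R^m$ via the sandwich $\{\phi<1\}\subseteq S(B_V)\subseteq\{\phi\le 1\}$, and observing that the discrepancy at the boundary is exactly the set of cosets where the quotient-norm infimum is not attained. All the individual steps check out: $\ker S=W$ uses closedness of $W$ together with Hahn--Banach, surjectivity of $S$ follows from linear independence of the $L_i$, and the final scaling argument (realising $S(x)/r$ as $S(b)$ with $b\in B_V$ and concluding $\norm{rb}=r$) correctly produces the nearest point. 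Two cosmetic remarks: the fact that the closed $\phi$-ball is the closure of the open one holds in any normed space, so finite-dimensionality is not what you need it for (what you do need $\R^m$ for is that "closed in $\R^m$" is the hypothesis being characterised); and you should dispose of the trivial case $\dist(x,W)=0$ separately, as you implicitly do, since there closedness of $W$ alone gives attainment. What your approach buys is a proof readable without Godini's theorem; what the paper's citation buys is brevity and access to the more general statement in Holmes.
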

Condition~\ref{item:proximinal_fin_codim} gives a condition for proximinality of the subspace $Z$ in our work, based on the linear functionals defining the regularised interpolation problem.\\
\ \\
Singer \citet{singer:70} addresses the question when every closed subspace of finite codimension, i.e.\ every possible $Z$ above, is proximinal. He proves the following result.
\begin{proposition}
  Let $\banachspace$ be a Banach space. Then all closed linear subspaces $W$ of a fixed, finite codimension $m$, where $1 \leq m \leq \dim(\banachspace)-1$ are proximinal if and only if $\banachspace$ is reflexive.
  \vspace*{-4mm}
\end{proposition}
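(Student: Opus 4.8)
The plan is to prove the two implications separately, the forward direction being routine and the converse carrying the real content. For the ``if'' direction I would in fact establish the stronger statement that in a reflexive space \emph{every} closed subspace $W$ is proximinal, without reference to codimension: given $x \in \banachspace$, take a minimising sequence $(w_n) \subset W$ with $\norm{x - w_n} \to \dist(x, W)$. This sequence is norm-bounded, so by reflexivity (via the Eberlein--\v{S}mulian theorem) it has a subsequence converging weakly to some $w$; since $W$ is closed and convex it is weakly closed, so $w \in W$, and weak lower semicontinuity of the norm gives $\norm{x - w} \leq \liminf_k \norm{x - w_{n_k}} = \dist(x, W)$. Hence the distance is attained.

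For the ``only if'' direction I would route everything through James' theorem, which characterises reflexivity of $\banachspace$ by the property that every $L \in \banachspace^\ast$ attains its norm on the unit ball. It thus suffices to show that, under the hypothesis that all closed subspaces of the fixed codimension $m$ are proximinal, every functional attains its norm. Fix $0 \neq L \in \banachspace^\ast$; since $m \leq \dim(\banachspace) - 1$ I can choose $L_2, \ldots, L_m$ so that $L_1 := L, L_2, \ldots, L_m$ are linearly independent, and set $W = \bigcap_{i=1}^m \ker(L_i)$, a subspace of codimension exactly $m$ with $\{L_1, \ldots, L_m\}$ a basis of $W^\perp$. By hypothesis $W$ is proximinal, so by the finite-codimension criterion of the Godini corollary stated above the image $S(B_\banachspace) = \{(L_1(x), \ldots, L_m(x)) \setseparator \norm{x} \leq 1\}$ is closed in $\R^m$; being also bounded, it is compact.

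The decisive step is then to extract norm attainment from this compactness: the coordinate functional $z \mapsto z_1$ attains its maximum over the compact set $S(B_\banachspace)$ at some $y^\ast$, and that maximal value equals $\sup_{\norm{x} \leq 1} L_1(x) = \norm{L_1}$, so pulling $y^\ast$ back to a preimage $x^\ast$ in the unit ball gives $L(x^\ast) = \norm{L}$. As $L$ was arbitrary, James' theorem delivers reflexivity. I expect the main obstacle to be conceptual rather than computational: everything hinges on James' theorem, the deep classical characterisation of reflexivity, while the finite-dimensional compactness argument is precisely the device that repackages the single-functional norm attainment James requires into the proximinality hypothesis. For $m = 1$ this reduces to the familiar fact, already invoked above, that $\ker(L)$ is proximinal exactly when $L$ attains its norm.
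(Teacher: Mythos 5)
Your argument is correct. Note that the paper itself gives no proof of this proposition --- it is quoted from Singer \cite{singer:70} --- but your route (weak compactness plus Mazur and weak lower semicontinuity of the norm for the ``if'' direction; James' theorem combined with the Godini/Garkavi closedness criterion $S(B_\banachspace)$ to manufacture norm attainment of an arbitrary functional for the ``only if'' direction) is exactly the classical argument behind Singer's result, and every step checks out, including the reduction of the single-functional case to a codimension-$m$ subspace via a linearly independent extension $L_1,\ldots,L_m$.
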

This means that our result is optimal in the sense that for every non-reflexive Banach space $\banachspace$ there exists a combination of linear functionals $L_i$ such that $Z=\cap L_i$ is not proximinal and we cannot obtain an exact representer theorem.

\end{appendix}

\end{document}